\theoremstyle{definition}
\newtheorem{theorem}{Theorem}[section]
\newtheorem{definition}{Definition}[section]
\newtheorem{example}{Example}[section]
\newtheorem{remark}{Remark}[section]
\title{The correct structures in fuzzy soft set theory}
\date{}
\begin{document}
\author{Santanu Acharjee$^1$ and Sidhartha Medhi$^2$\\
$^{1,2}$Department of Mathematics\\
Gauhati University\\
Guwahati-781014, Assam, India.\\
e-mails: $^1$sacharjee326@gmail.com,
$^2$sidharthamedhi365@gmail.com\\
Orchid: $^1$0000-0003-4932-3305,
$^2$0009-0001-6692-3647}

\maketitle
{\bf Abstract:} In 1999, Molodtsov \cite{1} developed the idea of soft set theory, proving it to be a flexible mathematical tool for dealing with uncertainty. Several researchers have extended the framework by combining it with other theories of uncertainty, such as fuzzy set theory, intuitionistic fuzzy soft set theory, rough soft set theory, and so on. These enhancements aim to increase the applicability and expressiveness of soft set theory, making it a more robust tool for dealing with complex, real-world problems characterized by uncertainty and vagueness.
The notion of fuzzy soft sets and their associated operations were introduced by Maji et al. \cite{7}. However, Molodtsov \cite{3} identified numerous incorrect results and notions of soft set theory that were introduced in the paper \cite{7}. Therefore, the derived concept of fuzzy soft sets is equally incorrect since the basic idea of soft sets in \cite{7} is flawed. Consequently, it is essential to address these incorrect notions and provide an exact and formal definition of the idea of fuzzy soft sets. This reevaluation is important to guarantee fuzzy soft set theory's theoretical stability and practical application across a range of domains. In this paper, we propose fuzzy soft set theory based on Molodtsov's correct notion of soft set theory and demonstrate a fuzzy soft set in matrix form. Additionally, we derive several significant findings on fuzzy soft sets.\\

{\bf Keywords: } Fuzzy soft set, soft set.\\

{\bf 2020 AMS Classifications: } 03B52; 03E72;  94D05.\\

\section{Introduction}
Addressing complex issues in fields such as computer science, information technology, economics, and engineering is often challenging because traditional methods cannot adequately manage the numerous uncertainties involved. Successfully resolving such problems requires advanced mathematical techniques that effectively handle these uncertainties. A key concept in this area is mathematical uncertainty, which deals with the inherent unpredictability and variability present in many systems and events. Probability theory plays a crucial role by providing a framework to quantify and manage the likelihood of various outcomes. Furthermore, statistical methods enable mathematicians to analyze data, draw inferences, and predict future events despite the randomness and variability. To enhance our ability to address these complexities, integrating machine learning and artificial intelligence with traditional statistical methods has become increasingly important. These technologies can process vast amounts of data, identify patterns, and make predictions with greater accuracy. By combining these advanced tools, we can improve decision-making processes in various disciplines, offering more robust solutions to problems characterized by uncertainty.\\

Molodtsov \cite{1} first introduced the concept of soft sets, providing a flexible framework for managing uncertainty and vagueness. Traditional approaches such as interval mathematics, fuzzy set theory, rough set theory, and probability theory have limitations in dealing with uncertain information because they rely on specific parameterization and membership functions. In contrast, soft set theory offers a more adaptable and generalized method, allowing for the consideration of parameters without predefined membership functions. Since Molodtsov's introduction, numerous researchers have explored and extended the concept of soft sets, along with their operations and relationships. Maji \cite{7} defined operations on soft sets to examine their fundamental properties, while Chen \cite{8}, Pei, and Miao \cite{9} identified inaccuracies in some of Maji's findings and proposed new concepts and properties. Furthermore, Molodtsov \cite{3} himself pointed out errors in some definitions and operations related to soft set theory as presented by other researchers.\\

Molodtsov \cite{3} stated the following: \textit{``...many authors have introduced new operations and relations for soft sets and used these structures in various areas of mathematics and in applied science. Unfortunately, in some works, the introduction of operations and relationships for soft sets was carried out without due regard to the specifics of soft set definition.”} In \cite{3}, the correct concepts of soft operations are clearly defined. Almost all of the authors of the previously mentioned research used incorrect definitions and concepts related to soft sets \cite{3}. Molodtsov also suggested in \cite{3} that the authors of \cite{7} provided incorrect definitions for the complement, union, and intersection of soft sets. Furthermore, several researchers have since used these incorrect notions in their work. Also, in \cite{2}, Molodtsov stated the following: \textit{``The principle of constructing correct operations for soft sets is very simple. Operations should be defined through families of sets $\tau(S,A)$.”} Thus, if we want to define certain operations in soft set theory, they must be determined by the set $\tau(S, A)$ of soft set $(S, A)$, rather than by the set of parameters. In a soft set $(S, A)$, the set of parameters plays just an auxiliary role \cite{2}. Parameters are used as short names for subsets of the family $\tau(S, A)$. Molodtsov \cite{4} also commented, \textit{``Dear Colleagues, I did not write such a definition of a soft set. You have added one extra condition that the set of parameters is a subset of a fixed set.”} To define a soft set $(S, A)$, practically all authors have been regarding the set of parameters as a subset of a fixed set. There are also many inaccuracies in the theoretical foundations of soft set theory. One may refer to Acharjee and Oza \cite{4}, Acharjee and Molodtsov \cite{24}.\\ 

The theory of fuzzy sets, introduced by Zadeh \cite{5}, is highly effective for managing uncertainty. As our understanding of uncertainty deepens, the demand for more flexible and expressive frameworks increases. To enhance this capability, fuzzy set theory has been combined with soft set theory, resulting in ``fuzzy soft sets." This theoretical construct aims to provide comprehensive and adaptable solutions for scenarios where traditional binary logic or precise values fall short. Molodtsov stated in his paper \cite{3}: \textit{``...the generation of hybrid approaches combining soft sets and fuzzy sets should be carried out very carefully, taking into account differences between the concepts of equivalence.”} Thus, it is essential to understand and address these differences to ensure that the resulting hybrid models are both theoretically sound and practically applicable. \\

Maji et al. \cite{7} established the concept of fuzzy soft sets and their related operations. In \cite{7}, the authors defined the fuzzy soft set by assuming the set of parameters to be a subset of a fixed set. This is an incorrect notion according to \cite{3, 4}. They also defined the "NOT set of a set of parameters" as follows: "Let $E={e_1, e_2, \dots, e_n}$ be a set of parameters. The NOT set of $E$ is denoted by $\daleth E ={\daleth e_1, \daleth e_2, \dots, \daleth e_n}$, where $\daleth e_i = \text{not } e_i$, $\forall i$." Using this concept, they defined the complement of a fuzzy soft set. However, Molodtsov commented \cite{3} that the expression `$\daleth e_i = \text{not } e_i$, $\forall i$' looks completely mysterious. He \cite{3} stated: \textit{“When defining a soft set, there are no restrictions on the set of parameters. Different objects may play the role of parameters. They can be numbers, words, sentences, subsets—generally speaking, anything that the author chooses while introducing a soft set. Therefore, the expression, $\daleth e_i = \text{not } e_i$, $\forall i$ looks a complete mystery.”}\\

Maji et al. \cite{7} also defined the binary operations, union and intersection, for any two soft sets $(S, A)$ and $(G, B)$ defined over a universal set $X$. For these operations, they considered the set of parameters of the resultant fuzzy soft set to be $A \cup B$ for unions and $A \cap B$ for intersections. Molodtsov \cite{2} stated that the operations on soft sets should be defined through families of sets $\tau(S,A)$. In his foundational work, he clearly stated that for any soft sets $(S, A)$ and $(G, B)$, the set of parameters of the resultant soft set in binary operations should be the Cartesian product of the sets of parameters $A$ and $B$, i.e., $A \times B$. This difference in approach underscores the need for careful consideration and possible reconciliation of these methodologies to ensure consistent and accurate application of soft set theory in its hybrid forms with fuzzy theory, known as "fuzzy soft set theory."
\\

The aforementioned incorrect concepts are widely utilized by researchers in science, social science, and various branches of mathematics. Despite their extensive applications across different mathematical fields, the existing foundational ideas of fuzzy soft set theory are not correct. Therefore, it is essential to re-examine and correct these misconceptions in fuzzy soft set theory to ensure their accurate and effective use in the future. In this paper, we aim to define key concepts such as fuzzy soft sets, equal fuzzy soft sets, and equivalent fuzzy soft sets, following the correct notions introduced by Molodtsov \cite{1, 2, 3}. We seek to establish various unary and binary operations on fuzzy soft sets through their families of fuzzy sets, developing significant results in the process. Additionally, we present fuzzy soft sets in matrix form, examining the behavior of two fuzzy soft sets through their matrix representations. This study explores these operations using their matrix representations, providing a comprehensive understanding of their interactions and properties. By accurately addressing these foundational concepts, we contribute to the theoretical and practical advancements in fuzzy soft set theory, ensuring its effective application in various fields.\\

\section{Preliminaries}

A soft set can be described as a mathematical structure consisting of two key components: one is a set of parameters $A$, and the other is a universal set $X$. In this section, we explore the fundamental concepts of soft set theory along with fuzzy soft set theory and their related outcomes.\\

\begin{definition} (\cite{1, 3, 4})
A soft set over the universal set $X$ is a pair $(S,A)$ where $S$ is a mapping from the set of parameters $A$ into the set of all subsets of $X$, i.e., $S : A \to 2^X$. In fact, a soft set is a parametrized family of subsets. For a given soft set $(S,A)$, the family $\tau(S,A)$ can be defined as $\tau(S,A)=\{S(a): a \in A\}$. \\
    
\end{definition}

\begin{definition}\cite{7}
    Let $U$ be an initial universe set and $E$ be a set of parameters. Let $\mathcal{P}(U)$ denotes the set of all fuzzy sets of $U$. Let $A\subset E$. A pair $(F, A)$ is called a fuzzy soft set over $U$, where $F$ is a mapping given by $F:A\to \mathcal{P}(U)$.\\
\end{definition}

\begin{definition}\cite{7}
    For two fuzzy soft sets $(F, A)$ and $(G, B)$ over a common universe $U$, we say that $(F, A)$ is a fuzzy soft subset of $(G, B)$ if 

    (i) $A\subset B$,
    
    (ii) $\forall \varepsilon \in A$, $F(\varepsilon)$ is a fuzzy subset of $G(\varepsilon)$.

    $(F, A)$ is said to be a fuzzy soft super set of $(G, B)$, if $(G, B)$ is a fuzzy soft subset of $(F, A)$. \\
\end{definition}

\begin{definition}\cite{7}
    Two fuzzy soft sets $(F, A)$ and $(G, B)$ over a common universe $U$ are said to be fuzzy soft equal if $(F, A)$ is a fuzzy soft subset of $(G, B)$ and $(G, B)$ is a fuzzy soft subset of $(F, A)$.\\
\end{definition}

\begin{definition}\cite{7}
    The complement of a fuzzy soft set $(F, A)$ is denoted by $(F, A)^c$ and is defined by $(F, A)^c=(F^c, \daleth A)$, where $F^c : \daleth A \to \mathcal{P}(U)$ is a mapping given by $F^c(\alpha)= \;fuzzy \;complement\; of\; F(\daleth\alpha)$, $\forall\alpha \in \daleth A$.\\
\end{definition}

\begin{definition}\cite{7}
Union of two fuzzy-soft-sets of $(F, A)$ and $(G, B)$ over the common universe $U$ is the fuzzy soft-set $(H, C)$, where $C=A\cup B$ and $\forall e\in C$,

    \[H(e)=\begin{cases}
        F(e), & e\in A-B\\
        G(e), & e\in B-A\\
        F(e) \tilde{\cup} G(e), & e\in A\cap B
    \end{cases}
    \]

    and it is denoted by $(F, A) \tilde{\cup} (G, B)=(H, C)$.\\
\end{definition}

\begin{definition}\cite{7}
    Intersection of two fuzzy-soft-sets of $(F, A)$ and $(G, B)$ over the common universe $U$ is the fuzzy soft-set $(H, C)$, where $C=A\cap B$ and $\forall e\in C$, $H(e)=F(e) \;or\; G(e)$, (as both are same set). We write $(F, A) \tilde{\cap} (G, B)=(H, C)$.\\
\end{definition}

\begin{definition}\cite{5}
A fuzzy set $A$ in $X$ is characterized by a membership (characteristic) function $\mu_A (x)$ which associates with each point in $X$ a real number in the interval [0, 1],  with the value of $\mu_A(x)$ at $x$ representing the ``grade of membership" of $x$ in $A$. \\
\end{definition}

\begin{definition}\cite{5}
   A fuzzy set is empty if and only if its membership function is identically zero on $X$.\\ 
\end{definition}

\begin{definition}\cite{5}
Two fuzzy sets $A$ and $B$ are equal, written as $A = B$, if and only if $\mu_A(x) = \mu_B(x)$ for all $x$ in $X$.\\
\end{definition}

\begin{definition}\cite{6}
    The fuzzy set $A$ is included in the fuzzy set $B$ (or, equivalently, $A$ is a subset of $B$, or $A$ is smaller than or equal to $B$) if for every $x\in X$, $\mu_A(x) \leq \mu_B(x)$. It is denoted by the  symbols $A\subseteq B$.\\
\end{definition}

\begin{definition}\cite{6}
     A fuzzy set $A$ is said to be proper subset of the fuzzy set $B$ denoted by $A\subset B$, when $A$ is subset of $B$ and $A \neq B$, i.e., there exists at least one $x\in X$ such that $\mu_A(x) < \mu_B(x)$.\\
\end{definition}

\begin{definition}\cite{6}
   The complement of a fuzzy set $A$ is denoted by $\overline{A}$ and is defined by $\mu_{\overline{A}}(x)=1-\mu_A(x)$.\\
\end{definition}

\begin{definition}\cite{5}
    The union of two fuzzy sets $A$ and $B$ with respective membership functions $\mu_A(x)$ and $\mu_B(x)$ is a fuzzy set $C$, written as $C=A\cup B$, whose membership function is related to those of $A$ and $B$ by 
\begin{center}
    $\mu_C(x) = Max[\mu_A(x), \mu_B(x)]$, $x\in X$.\\
\end{center}
\end{definition}

\begin{definition}\cite{5}
The intersection of two fuzzy sets $A$ and $B$ with respective membership functions $f_A(x)$ and $f_B(x)$ is a fuzzy set $C$, written as $C=A\cap B$, whose membership function is related to those of $A$ and $B$ by 
\begin{center}
    $\mu_C(x) = Min[\mu_A(x), \mu_B(x)]$, $x\in X$.\\
\end{center}
\end{definition}

In this study, we refer to $\tilde{\phi}$ as empty fuzzy set and $\tilde{X}$ as a fuzzy set such that $\mu_{\tilde{X}}(x)=1$, for every $x\in X$.

\section{Main Results}
In the past few years, many researchers have introduced the concept of fuzzy soft sets and their related operations \cite{7}. In \cite{2, 3}, Molodtsov showed the incorrectness in definitions and related operations of soft set and fuzzy soft set. Now, we try to develop the correct definition and operation of fuzzy soft sets so that future researchers can carry this out in their respective papers or projects in the field of fuzzy soft sets. In this section, we consider $(S,A)$ and $(G,B)$ as fuzzy soft sets defined over a universal set $X$ and $A$ and $B$ as the set of parameters. \\

\begin{definition}
    A fuzzy soft set is a pair $(S,A)$ defined over a universal set $X$ if and only if $S$ is a mapping from the set of parameters $A$ into the set of all fuzzy subsets of $X$, i.e., $S : A\to P(X)$. So, in a fuzzy soft set $(S, A)$, each $S(a)$, $\forall a \in A$, is a fuzzy set. For a given fuzzy soft set (S,A), the family $\tau (S,A)$ can be defined as the set $\tau (S,A)=\{S(a), a\in A \}$.

    Here, $P(X)$ denotes the set of all fuzzy sets of $X$.\\
\end{definition}
\begin{example}
    Suppose $X=\{a, b, c\}$ and $A=\{x, y, z\}$ are universal set and set of parameters respectively. We define a fuzzy soft set $(S,A)$ as follows: $S(x)=\{\frac{b}{0.3}, \frac{c}{0.7}\}$, $S(y)=\{\frac{c}{0.1}\}$ and  $S(z)=\{\frac{a}{0.5}\}$. Therefore, $(S,A)=\{(x, \{\frac{b}{0.3}, \frac{c}{0.7}\}), (y, \{\frac{c}{0.1}\}), (z,\{\frac{a}{0.5}\})\}$ and  $\tau (S,A)=\{ \{\frac{b}{0.3}, \frac{c}{0.7}\}), \{\frac{c}{0.1}\}, \{\frac{a}{0.5}\}\}$.\\
\end{example}
\begin{definition}
       Two fuzzy soft sets $(S,A)$ and $(G,B)$ defined over a universal set $X$, are said to be equal if and only if $S=G$ and $A=B$, i.e., $S(a)=G(a)$, $\forall a\in A$. In this case it is denoted by $(S,A)=(G,B)$.\\
\end{definition}
\begin{definition}
Two fuzzy soft sets $(S,A)$ and $(G,B)$ defined over a universal set $X$, are said to be equivalent if and only if $\tau (S,A)=\tau (G,B)$. It is denoted by $(S,A) \cong (G,B)$.\\
\end{definition}

\begin{example}
    Consider a fuzzy soft set $(G,B)$ defined over a universal set $X=\{a, b, c\}$. We define this fuzzy soft set as follows: \\
    
    ~~~~~~~~~~~~~~~$(G,B)=\{(m, \{\frac{b}{0.3}, \frac{c}{0.7}\}), (n, \{\frac{a}{0.5}\}), (o,\{\frac{c}{0.1}\})\}$\\

    Here,  $\tau (G,B)=\{ \{\frac{b}{0.3}, \frac{c}{0.7}\}), \{\frac{a}{0.5}\}, \{\frac{c}{0.1}\}\}$. So, the fuzzy soft sets $(S,A)$ of example 3.1 and $(G,B)$ are equivalent. So, we can write as $\tau(S,A)= \tau(G,B)$.
\end{example}

\begin{definition}
    A fuzzy soft set $(S,A)$ internally approximates a fuzzy soft set $(G,B)$ defined over a universal set $X$, denoted by $(S,A)\subseteq (G,B)$ if for any $b\in B$ such that $G(b)$ is not an empty fuzzy set, i.e., $G(b)\neq \tilde{\phi}$, there exists $ a \in A$ for which $\tilde{\phi} \neq S(a) \subseteq G(b)$.
    
\end{definition}

\begin{example}
    We consider two fuzzy soft sets $(S,A)$ and $(G,B)$ defined over a universal set $X=\{a, b, c\}$ as follows: $(S,A)=\{(x, \{\frac{b}{0.3}, \frac{c}{0.7}\}), (y, \{\frac{c}{0.1}\}), (z,\{\frac{a}{0.5}\})\}$ and $(G,B)=\{(m, \{\frac{a}{0.5}\}), (n, \{\frac{c}{0.2}\}), (o,\{\frac{c}{0.4}\})\}$. Then, $(S,A)\subseteq (G,B)$.
\end{example}
\begin{definition}
      A fuzzy soft set $(S,A)$ externally  approximates a fuzzy soft set $(G,B)$ defined over a universal set $X$, denoted by $(S,A)\supseteq (G,B)$ if for any $b\in B$ such that $G(b)\neq \tilde{X}$, there exists $a \in A$ for which $\tilde{X} \neq S(a) \supseteq G(b)$.
\end{definition}

\begin{example}
    In example 4.3, the fuzzy soft set $(S,A)$ externally approximates the fuzzy soft set $(G,B)$.
\end{example}

\begin{definition}
    A fuzzy soft set $(S,A)$ internally strictly approximates a fuzzy soft set $(G,B)$ defined over a universal set $X$ and it is denoted by $(S,A)\subset (G,B)$ if $(S,A)\subseteq (G,B)$ but the relation $(G,B)\subseteq (S,A)$ has no place.
\end{definition}

\begin{example}
    In example 3.3, $(S,A)\subseteq (G,B)$ but
    $(G,B)\nsubseteq (S,A)$. Therefore, we can say that $(S,A)\subset (G,B)$.
\end{example}

\begin{definition}
     A fuzzy soft set $(S,A)$ externally strictly approximates a fuzzy soft set $(G,B)$ defined over a universal set $X$ and it is denoted by $(S,A)\supset (G,B)$ if $(S,A)\supseteq (G,B)$ but the relation $(G,B)\supseteq (S,A)$ has no place.
\end{definition}

\begin{example}
    In example 3.3, $(S,A)\supseteq (G,B)$ but
    $(G,B)\nsupseteq (S,A)$. Therefore, we can write that $(S,A)\supset (G,B)$.
\end{example}

\begin{definition}
    Let $(S, A)$ and $(G, B)$ are two fuzzy soft sets defined over a universal set $X$. Then the fuzzy soft set $(S,A)$ is internally equivalent to the fuzzy soft set $(G,B)$ if $(S,A)\subseteq (G,B)$ and $(G,B)\subseteq (S,A)$. It is denoted by $(S,A)\stackrel{\subset}{\approx}(G,B)$.
\end{definition}

\begin{example}
    Consider two fuzzy soft sets $(S,A)$ and $(G,B)$ defined over a universal set $X$, where $X=\{a, b, c\}$, $A=\{x, y, z\}$ and $B=\{m, n\}$. Now we define these two fuzzy soft sets as,  
    $(S,A)=\{(x, \{\frac{a}{0.2}\}), (y, \{\frac{a}{0.4}\}), (z,\{\frac{a}{0.4}, \frac{b}{0.5}\})\}$ and $(G,B)=\{(m, \{\frac{a}{0.3}, \frac{b}{0.5}\}), (n, \{\frac{a}{0.2}\})\}$. From the above, we can clearly find that $(S,A)\subseteq (G,B)$ and $(G,B)\subseteq (S,A)$. This implies that $(S,A)\stackrel{\subset}{\approx}(G,B)$.

\end{example}

\begin{definition}
     Consider two fuzzy soft sets $(S, A)$ and $(G, B)$ defined over a universal set $X$. Then the fuzzy soft set $(S,A)$ is externally equivalent to the fuzzy soft set $(G,B)$ if $(S,A)\supseteq (G,B)$ and $(G,B)\supseteq (S,A)$. It is denoted by $(S,A)\stackrel{\supset}{\approx}(G,B)$.
\end{definition}

\begin{example}
      Let $(S, A)$ and $(G, B)$ be two fuzzy soft sets defined over a universal set $X$. We consider $X=\{a, b, c\}$, $A=\{x, y, z\}$ and $B=\{m, n, o\}$ and fuzzy soft sets $(S,A)=\{(x, \{\frac{a}{0.4}\}), (y, \{\frac{a}{0.5}, \frac{b}{0.2}, \frac{c}{0.3}\}), (z,\{\frac{a}{0.4}, \frac{b}{0.2}\})\}$ and $(G,B)=\{(m, \{\frac{a}{0.3}\}), (n, \{\frac{a}{0.5}, \frac{b}{0.2}, \frac{c}{0.3}\}), (o, \{\frac{b}{0.1}, \frac{c}{0.2})\}$.

    From the above, we can clearly find that $(S,A)\supseteq (G,B)$ and $(G,B)\supseteq (S,A)$. It implies that $(S,A)\stackrel{\supset}{\approx}(G,B)$.
\end{example}

\begin{definition}

    Let $(S, A)$ and $(G, B)$ are two fuzzy soft sets defined over a universal set $X$. Then $(S, A)$ is said to be weakly equivalent to $(G, B)$ if $(S,A)\stackrel{\subset}{\approx}(G,B)$ and $(S,A)\stackrel{\supset}{\approx}(G,B)$. It is denoted by $(S,A)\approx (G,B)$ 
\end{definition}

\begin{example}
    The following two fuzzy soft sets $(S,A)$ and $(G,B)$ defined over a universal set $X=\{a, b, c, d\}$ are weakly equivalent, where the sets of parameters are $A=\{x, y, z\}$ and $B=\{m, n, o\}$ and $(S,A)=\{(x, \{\frac{a}{0.4}\}), (y, \{\frac{a}{0.5}, \frac{b}{0.2}, \frac{c}{0.3}\}), $ $(z, \{\frac{a}{0.4}, \frac{b}{0.2}\})\}$ and $(G,B)=\{(m, \{\frac{a}{0.4}\}), (n, \{\frac{a}{0.5}, \frac{b}{0.2}, \frac{c}{0.3}\}), (o, \{\frac{a}{0.4}, \frac{c}{0.2}\})\}$.
\end{example}
\begin{definition}
    For a fuzzy soft set $(S, A)$ defined over a universal set $X$, the minimal and maximal on inclusion of sets of the family $\tau(S,A)$ are defined as:\\

$MIN(\tau(S,A))=\{C\in \tau(S,A)~|~C\neq \tilde{\phi}, \nexists~  D \in \tau(S,A) : D\subset C \;and\; D \neq \tilde{\phi} \}$

$MAX(\tau(S,A))=\{C\in \tau(S,A)~|~C\neq \tilde{X}, \nexists~  D \in \tau(S,A) : D\supset C \;and\; D \neq \tilde{X} \}$.

The elements of the set $MIN(\tau(S,A))$ are called minimal element of the family $\tau(S,A)$ and the elements of the set $MAX(\tau(S,A))$ are referred as the maximal element of family $\tau(S,A)$.
\end{definition}

\begin{example}
    Consider a fuzzy soft set $(S, A)$ defined over a universal set $X$, where the parameter set $A=\{x, y, z\}$, and the universal set $X=\{a, b, c\}$.  We define the fuzzy soft set $(S,A)$ as follows:\\

    $S(x)=\{\frac{a}{0.5}\}$, $S(y)=\{\frac{a}{0.6}, \frac{b}{0.2}, \frac{c}{0.3}\}$ and $S(z)=\{\frac{a}{0.7}, \frac{b}{0.2}\}$ \\

    Thus, $\tau(S, A)=\{\{\frac{a}{0.5}\}, \{\frac{a}{0.6}, \frac{b}{0.2}, \frac{c}{0.3}\}, \{\frac{a}{0.7}, \frac{b}{0.2}\}\}$\\

    Now, from the family of sets $\tau(S, A)$, we can clearly get that 
    $MIN(\tau(S,A))=\{\{\frac{a}{0.5}\}\}$ and $MAX(\tau(S,A))=\{\{\frac{a}{0.6}, \frac{b}{0.2}, \frac{c}{0.3}\}, \{\frac{a}{0.7}, \frac{b}{0.2}\}\}$. 
\end{example}
\begin{definition}
    Let $(S, A)$ be a fuzzy soft set defined over a universal set $X$. Then, the complement of $(S, A)$ is denoted by $C(S, A)$ and defined as $CS(a)= \overline {S(a)}$, $\forall a\in A$.
\end{definition}

\begin{example}
    Let $(S, A)$ be a fuzzy soft set defined over a universal set $X$ where  $X=\{x, y, z\}$ and $A=\{a, b\}$. We define the fuzzy soft set $(S, A)$ as follows:

    $ S(a)=\{\frac{x}{0.2}, \frac{z}{0.8}\}$ and $S(b)=\{\frac{x}{0.7}, \frac{y}{1}\}$. So, $\overline {S(a)}=\{\frac{x}{0.8},\frac{y}{1}, \frac{z}{0.2}\}$ and $\overline {S(b)}=\{\frac{x}{0.3},\frac{y}{0}, \frac{z}{1}\}=\{\frac{x}{0.3}, \frac{z}{1}\}$. Therefore, $C(S,A)=\{(a,\{\frac{x}{0.8},\frac{y}{1}, \frac{z}{0.2}\}), (b,\{\frac{x}{0.3}, \frac{z}{1}\})\}$.
    
\end{example}

\begin{definition}
    Let $(S, A)$ and $(G,B)$ be two fuzzy soft sets defined over a universal set $X$. Then, the binary operation union of these fuzzy soft sets is again a fuzzy soft set $(U, A \times B)$ over the same universal set $X$, i.e., $(S,A) \cup (G,B)=(U, A\times B)$ and it is defined by the following:

    $U(a, b)=S(a)\cup G(b)$ , $\forall (a ,b) \in A\times B$.

    Here, the membership value of fuzzy union $S(a)\cup G(b)$ is defined as

    $\mu_{U(a,b)}(x) = \mu_{S(a)\cup G(b)}(x) = Max\{\mu_{S(a)}(x), \mu_{G(b)}(x)\}$, $\forall x\in X$.
\end{definition}

\begin{definition}
     Let $(S, A)$ and $(G,B)$ be two fuzzy soft sets defined over a universal set $X$. Then, the binary operation intersection of these fuzzy soft sets is again a fuzzy soft set $(I, A \times B)$ over the same universal set X i.e., $(S,A) \cap (G,B)=(I, A\times B)$ and it is defined by the following:

    $I(a, b)=S(a)\cap G(b)$ , $\forall (a ,b) \in A\times B$.

    Here, the membership value of fuzzy intersection $S(a)\cap G(b)$ defined as

    $\mu_{I(a,b)}(x) = \mu_{S(a)\cap G(b)}(x) = Min\{\mu_{S(a)}(x), \mu_{G(b)}(x)\}$, $\forall x\in X$.
\end{definition}
\begin{example}
    Let us consider two fuzzy soft sets $(S, A)$ and $(G, B)$ defined over a  universal set $X=\{a, b, c\}$. We also consider the set of parameters as $A=\{x, y\}$ and $B=\{p, q\}$ and we define these two fuzzy soft sets as given below : \\

    $S(x)=\{\frac{a}{0.4}, \frac{b}{0.8}\}$, $S(y)=\{\frac{c}{0.7}\}$, $G(p)=\{\frac{b}{0.3}, \frac{c}{0.5}\}$ and $G(q)=\{\frac{a}{0.6}, \frac{c}{1}\}$\\

    Then, $U(x, p)= S(x) \cup G(p)= \{\frac{a}{0.4}, \frac{b}{0.8}\} \cup \{\frac{b}{0.3}, \frac{c}{0.5}\} = \{\frac{a}{0.4}, \frac{b}{0.8}, \frac{c}{0.5}\}$\\

     $U(x, q)= S(x) \cup G(q)= \{\frac{a}{0.4}, \frac{b}{0.8}\} \cup \{\frac{a}{0.6}, \frac{c}{1}\} = \{\frac{a}{0.6}, \frac{b}{0.8}, \frac{c}{1}\}$,\\

     $U(y, p)= S(y) \cup G(p)= \{ \frac{c}{0.7}\} \cup \{\frac{b}{0.3}, \frac{c}{0.5}\} = \{\frac{b}{0.3}, \frac{c}{0.7}\}$\\

      $U(y, q)= S(y) \cup G(q)= \{\frac{c}{0.7}\} \cup \{\frac{a}{0.6}, \frac{c}{1}\} = \{\frac{a}{0.6}, \frac{c}{1}\}$,\\

      Therefore, $(U, A\times B)=\{((x, p), \{\frac{a}{0.4}, \frac{b}{0.8}, \frac{c}{0.5}\}), ((x, q), \{\frac{a}{0.6}, \frac{b}{0.8}, \frac{c}{1}\}),\\ ((y,p), \{\frac{b}{0.3}, \frac{c}{0.7}\}), ((y, q), \{\frac{a}{0.6}, \frac{c}{1}\})\}$.\\

      Similarly we can find the intersection of the given fuzzy soft sets $(S, A)$ and $(G, B)$ as,

      $(I, A\times B)=\{((x, p), \{ \frac{b}{0.3}\}), ((x, q), \{\frac{a}{0.4}\}), ((y,p), \{ \frac{c}{0.5}\}), ((y, q), \{ \frac{c}{0.7}\})\}$.
\end{example}

\begin{definition}
    Consider two fuzzy soft sets  $(S, A)$ and $(G, B)$ defined over a  universal set $X$. Then the product of these two fuzzy soft sets is again a fuzzy soft set, say $(P, A\times B) $, i.e., $(P, A\times B)=(S,A) \times (G,B)$ and it is defined as $P(a, b)= S(a).G(b)$, $\forall (a, b) \in A\times B$.

    Here, $S(a).G(b)$ is fuzzy algebraic product of the fuzzy sets $S(a)$ and $G(b)$ and $\mu_{S(a).G(b)}(x)= {\mu_{S(a)}(x)}.{\mu_{G(b)}(x)}$, $\forall x \in X$.
\end{definition}

\begin{definition}
      The sum of two fuzzy soft sets $(S, A)$ and $(G, B)$ defined over a universal set $X$, is a fuzzy soft set, say $(H, A\times B) $, i.e., $(H, A\times B)=(S,A) + (G,B)$ and it is defined as $H(a, b)= S(a) \hat{+} G(b)$, $\forall (a, b) \in A\times B$.

    Here, $S(a)\hat{+} G(b)$ is the fuzzy algebraic sum of the fuzzy sets $S(a)$ and $G(b)$ and $\mu_{S(a)\hat{+} G(b)}(x)= {\mu_{S(a)}(x)}+{\mu_{G(b)}(x)}- {\mu_{S(a)}(x)}.{\mu_{G(b)}(x)}$, $\forall x \in X$.
\end{definition}

\begin{theorem}
    If two fuzzy soft sets $(S, A)$ and $(G, B)$ defined over a universal set $X$, are weakly equivalent, then

(i) $MIN(\tau(S,A))=MIN(\tau(G,B))$,

(ii)$MAX(\tau(S,A))=MAX(\tau(G,B))$.
\end{theorem}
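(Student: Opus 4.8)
\section*{Proof proposal}

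The plan is to establish (i) using only the internal half of weak equivalence, namely $(S,A)\subseteq(G,B)$ together with $(G,B)\subseteq(S,A)$, and then to obtain (ii) by the mirror-image argument in which $\tilde{\phi}$, $\subseteq$, $\subset$ are replaced by $\tilde{X}$, $\supseteq$, $\supset$ and the external approximations are used. Since weak equivalence is visibly symmetric in $(S,A)$ and $(G,B)$ (each of the defining conditions $(S,A)\subseteq(G,B)$, $(G,B)\subseteq(S,A)$ is symmetric, and likewise for the external ones), it suffices, for (i), to prove the inclusion $MIN(\tau(S,A))\subseteq MIN(\tau(G,B))$; the reverse inclusion is the same statement with the two fuzzy soft sets interchanged.

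So I would fix $C\in MIN(\tau(S,A))$, write $C=S(a)$ with $C\neq\tilde{\phi}$, and run a two-step ``bounce'' between the families. First, applying $(G,B)\subseteq(S,A)$ to the parameter $a$ (permissible because $S(a)\neq\tilde{\phi}$) produces $b\in B$ with $\tilde{\phi}\neq G(b)\subseteq C$. Second, applying $(S,A)\subseteq(G,B)$ to that $b$ (permissible because $G(b)\neq\tilde{\phi}$) produces $a'\in A$ with $\tilde{\phi}\neq S(a')\subseteq G(b)\subseteq C$. Now minimality of $C$ in $\tau(S,A)$ forbids $S(a')\subset C$, so $S(a')=C$, and the sandwich $C=S(a')\subseteq G(b)\subseteq C$ collapses to $C=G(b)\in\tau(G,B)$. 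Getting $C$ itself back into $\tau(G,B)$, rather than merely locating some element of $\tau(G,B)$ comparable to $C$, is the crux of the argument and the step I expect to be the main obstacle.

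It then remains to check that $C$ is minimal in $\tau(G,B)$. I would argue by contradiction: if some $D\in\tau(G,B)$ satisfied $\tilde{\phi}\neq D\subset C$, then writing $D=G(b'')$ and applying $(S,A)\subseteq(G,B)$ once more would yield $a''\in A$ with $\tilde{\phi}\neq S(a'')\subseteq D\subset C$, so $S(a'')\in\tau(S,A)$ would be a nonempty proper fuzzy subset of $C$, contradicting $C\in MIN(\tau(S,A))$. Hence $C\in MIN(\tau(G,B))$, which gives the desired inclusion and, by symmetry, proves (i).

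For (ii) I would repeat the identical scheme with the external approximations: starting from $C=S(a)\in MAX(\tau(S,A))$, use $(G,B)\supseteq(S,A)$ and then $(S,A)\supseteq(G,B)$ to sandwich $C$ between members of $\tau(G,B)$ and conclude $C=G(b)$ for some $b$, then exclude any $D\in\tau(G,B)$ with $\tilde{X}\neq D\supset C$ by pulling it back through $(S,A)\supseteq(G,B)$ to contradict maximality of $C$ in $\tau(S,A)$. The only routine points needing attention throughout are the side conditions ($G(b)\neq\tilde{\phi}$, $S(a)\neq\tilde{\phi}$ in (i); $\neq\tilde{X}$ in (ii)) that must hold each time an approximation relation is invoked, and these are supplied precisely by the defining clauses of $MIN$ and $MAX$.
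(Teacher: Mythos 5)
Your proposal is correct and follows essentially the same route as the paper: the same two-step ``bounce'' producing the sandwich $S(a')\subseteq G(b)\subseteq C$, the same collapse via minimality to get $C=G(b)\in\tau(G,B)$, the same contradiction argument to verify minimality of $C$ in $\tau(G,B)$, and the same appeal to symmetry for the reverse inclusion and the mirror-image argument for (ii). No gaps; the side conditions you flag are handled exactly as in the paper.
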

        
 \begin{proof}
  Let $(S, A)$ and $(G, B)$ be two weakly equivalent fuzzy soft sets defined over a universal set $X$. So, $(S, A)\stackrel{\subset}{\approx}(G, B)$ and $(S, A)\stackrel{\supset}{\approx}(G, B)$. Also we know that, if $(S, A)\stackrel{\subset}{\approx}(G, B)$ then $(S, A) \subseteq (G, B)$ and $(G, B)\subseteq (S, A)$. Again, if $(S, A)\stackrel{\supset}{\approx}(G, B)$ then $(S, A) \supseteq (G, B)$ and $(G, B)\supseteq (S, A)$.\\

 (i) To prove the theorem, we first consider a fuzzy set $S(a_i)\in MIN(\tau(S,A))$. Then $S(a_i)$ is a non-empty fuzzy set, and there does not exist any non-empty fuzzy set $S(a_j) \in \tau(S,A)$ such that  $S(a_j) \subset S(a_i)$. Given that $(G, B)\subseteq (S, A)$. Therefore, for the fuzzy set $S(a_i)$, there exists a non-empty fuzzy set $G(b_k) \in \tau(G,B)$ such that $G(b_k) \subseteq S(a_i)$. Also as $(S,A) \subseteq (G,B)$. So, for the fuzzy set $G(b_k)$, there exists a non-empty fuzzy set $S(a_j) \in \tau(S,A)$ such that $S(a_j) \subseteq G(b_k)$.
Hence, we get $S(a_j)\subseteq G(b_k)\subseteq S(a_i)$. However, since $S(a_i)\in MIN(\tau(S,A))$, we must conclude that $S(a_j)=S(a_i)$. Hence $ S(a_i)=G(b_k)$. This implies that $S(a_i)\in \tau(G,B)$.\\

Now, we prove $G(b_k)\in MIN(\tau(G,B))$. For this, we first consider $G(b_k)\notin MIN(\tau(G,B))$. Then, there exists a fuzzy set $G(b_n)(\neq \tilde{\phi})\in \tau(G,B)$, such that $ G(b_n) \subset G(b_k)$. Since, $(S,A)\subseteq (G,B)$, then for the fuzzy set $G(b_n)$, there exists $S(a_k) (\neq \tilde{\phi})\in \tau(S,A)$, such that $S(a_k)\subseteq G(b_n)$.
Thus, we get $S(a_k)\subseteq G(b_n) \subset G(b_k) = S(a_i)$. This implies that $S(a_i) \notin MIN(\tau(S,A))$, which is a contradiction. Hence, $G(b_k)=S(a_i)\in MIN(\tau(G,B))$. Hence, $MIN(\tau(S,A)) \subseteq MIN(\tau(G,B))$. By the similar process, we can prove that $MIN(\tau(G,B)) \subseteq MIN(\tau(S,A))$. Hence, $MIN(\tau(S,A))=MIN(\tau(G,B))$.\\

(ii)
Let $G(b_i) \in MAX(\tau(G, B))$. Then, there does not exist any $G(b_j) (\neq \Tilde{X}) \in \tau (G, B)$ such that $G(b_j)\supset G(b_i)$. Since $(S, A)\supseteq (G, B)$, so for the fuzzy set $G(b_i)$, there exists a fuzzy set $S(a_k) (\neq \Tilde{X}) \in \tau(S, A)$ such that $S(a_k) \supseteq G(b_i)$. Also, given that $(G, B) \supseteq (S, A)$. So, for the fuzzy set $S(a_k)$, there exists a fuzzy set $G(b_j)(\neq \Tilde{X}) \in \tau(G, B)$ such that $ G(b_j) \supseteq S(a_k)$. Thus, we get $G(b_j)\supseteq S(a_k)\supseteq G(b_i)$. But, $G(b_i)\in MAX(\tau(G, B))$, thus we must get  $G(a_i)=G(b_j)$. Hence, $S(a_k)=G(b_i)$. This implies that $G(b_i)\in \tau(S, A)$.\\

Now, we have to prove that $S(a_k)\in MAX(\tau(S,A))$. For this, we first consider $S(a_k)\notin MAX(\tau(S,A))$. Then, there exists a fuzzy set $S(a_j)(\neq \Tilde{X})\in \tau(S, A)$, such that $S(a_j) \supset S(a_k)$. Since, $(G,B)\supseteq (S,A)$, so for the fuzzy set $S(a_j)$, there exists $G(b_k) (\neq \Tilde{X})\in \tau(G, B)$, such that $ G(b_k)\supseteq S(a_j)$.
Then, we get $G(b_k)\supseteq S(a_j) \supset S(a_k) = G(b_i)$, which contradicts our assumption $G(b_i) \in MAX(\tau(G,B))$. Hence, $G(b_i)=S(a_k)\in MAX(\tau(S, A))$. Hence, $MAX(\tau(G,B)) \subseteq MAX(\tau(S,A))$. Using a similar procedure, we can prove that $MAX(\tau(S,A)) \subseteq MAX(\tau(G,B))$. Hence, $MAX(\tau(S,A))=MAX(\tau(G,B))$.\\

\end{proof}
 \begin{theorem}
     Let $(S, A)$ be a fuzzy soft set given defined over a universal set $X$ and $C(S,A)$ be its complement of $(S, A)$. Then for $a\in A$, the following two properties are occur :\\

     (i) $S(a) \in MAX(\tau(S,A))$ if and only if   $\overline{S(a)} \in MIN(\tau (C(S,A)))$,

     (ii) $S(a) \in MIN(\tau(S,A))$ if and only if   $\overline{S(a)} \in MAX(\tau (C(S,A)))$.
 \end{theorem}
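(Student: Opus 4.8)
The plan is to reduce both statements to the single fact that fuzzy-set complementation is an order-reversing involution that interchanges $\tilde{\phi}$ and $\tilde{X}$; once this is in hand, the defining condition of $MAX$ translates clause by clause into that of $MIN$ under complementation, and conversely.

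First I would record the following elementary consequences of the definition of the fuzzy-set complement (that is, $\mu_{\overline{B}}(x)=1-\mu_B(x)$ for all $x\in X$), valid for arbitrary fuzzy sets $B$ and $C$ on $X$: (a) $\overline{\overline{B}}=B$; (b) $\overline{\tilde{\phi}}=\tilde{X}$ and $\overline{\tilde{X}}=\tilde{\phi}$; (c) $B\subseteq C$ if and only if $\overline{C}\subseteq\overline{B}$, and likewise $B\subset C$ if and only if $\overline{C}\subset\overline{B}$. Each is an immediate pointwise check using the definitions of fuzzy inclusion, proper inclusion, $\tilde{\phi}$, and $\tilde{X}$. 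From (a) it follows that $B\mapsto\overline{B}$ restricts to a bijection of the family $\tau(S,A)=\{S(a):a\in A\}$ onto the family $\tau(C(S,A))=\{\overline{S(a)}:a\in A\}$, since by the definition of the complement of a fuzzy soft set the map underlying $C(S,A)$ sends $a$ to $\overline{S(a)}$. This bijection, order-reversing by (c), is the engine of the whole argument; it must be used at the level of the families of fuzzy sets, since $MAX$ and $MIN$ are defined in terms of $\tau$, not in terms of parameters.

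For (i), fix $a\in A$ and argue by a chain of equivalences. By (b), $S(a)\neq\tilde{X}$ if and only if $\overline{S(a)}\neq\tilde{\phi}$, which matches the first clause in the definitions of $MAX(\tau(S,A))$ and $MIN(\tau(C(S,A)))$. For the second clause, there exists $D\in\tau(S,A)$ with $D\neq\tilde{X}$ and $D\supset S(a)$ if and only if there exists $D'\in\tau(C(S,A))$ with $D'\neq\tilde{\phi}$ and $D'\subset\overline{S(a)}$: given such a $D$ put $D'=\overline{D}$ and apply (b) and (c); given such a $D'$ put $D=\overline{D'}$ and apply (a), (b), (c). Hence the nonexistence of such a $D$ is equivalent to the nonexistence of such a $D'$, and combining the two clauses yields $S(a)\in MAX(\tau(S,A))$ if and only if $\overline{S(a)}\in MIN(\tau(C(S,A)))$.

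Finally, for (ii) I would avoid repeating the argument by applying (i) to the fuzzy soft set $C(S,A)$ in place of $(S,A)$. By (a), the map underlying $C(C(S,A))$ sends $a$ to $\overline{\overline{S(a)}}=S(a)$, so $C(C(S,A))=(S,A)$; thus (i) applied to $C(S,A)$ states that $\overline{S(a)}\in MAX(\tau(C(S,A)))$ if and only if $S(a)\in MIN(\tau(S,A))$, which is exactly (ii). I do not anticipate a genuine obstacle here; the only point needing care is the faithful transfer of the \emph{strict}-inclusion clause together with the exclusions $D\neq\tilde{X}$ and $D'\neq\tilde{\phi}$, and this is covered precisely by (b) and (c).
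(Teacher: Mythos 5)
Your proposal is correct and follows essentially the same route as the paper: both arguments rest on fuzzy complementation being an order-reversing involution exchanging $\tilde{\phi}$ and $\tilde{X}$, and both translate the non-existence clause in the definition of $MAX$ into that of $MIN$ under the bijection $S(a)\mapsto\overline{S(a)}$ between $\tau(S,A)$ and $\tau(C(S,A))$. The only differences are cosmetic improvements on your side: you explicitly check the first clause ($S(a)\neq\tilde{X}$ iff $\overline{S(a)}\neq\tilde{\phi}$), which the paper leaves implicit, and you obtain (ii) by applying (i) to $C(S,A)$ via $C(C(S,A))=(S,A)$ rather than repeating the direct argument as the paper does.
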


\begin{proof}

     We know that for any two fuzzy sets $\mathcal{A}$ and $\mathcal{B}$ defined over $X$, the following conditions are hold:

     (a) $\overline{\overline{\mathcal{A}}}= \mathcal{A}$,

     (b) $\mathcal{A} \subset \mathcal{B}$ if and only if $\overline{\mathcal{A}} \supset \overline{\mathcal{B}}$.\\

Using these results, we proceed for our proofs.\\

     (i) First we prove that if $S(a) \in MAX(\tau(S,A))$ then $\overline{S(a)} \in MIN(\tau (C(S,A)))$,  for $a\in A$.\\ 
     
     For this, let us assume that $S(a) \in MAX(\tau(S,A))$. Then, for $b\in A$, $\nexists ~S(b) \in \tau(S,A)$ such that $S(b)\supset S(a)$ and $S(b) \neq \Tilde{X}$. This implies $\nexists ~~\overline{S(b)} \in \tau(C(S,A))$ such that $\overline{S(b)}\subset \overline{S(a)}$ and $\overline{S(b)} \neq \Tilde{\phi}$. Hence, $\overline{S(a)} \in MIN(\tau (C(S,A)))$.\\

     Conversely, let $\overline{S(a)} \in MIN(\tau (C(S,A)))$, for $a\in A$. Then for $b\in A$, $\nexists ~~\overline{S(b)} \in \tau(C(S,A))$ such that $\overline{S(b)}\subset \overline{S(a)}$ and $\overline{S(b)} \neq \Tilde{\phi}$. So, $\nexists ~S(b) \in \tau(S,A)$ such that $S(b)\supset S(a)$ and $S(b) \neq \Tilde{X} $. Hence, $S(a) \in MAX(\tau(S,A))$.\\

     (ii) We first prove that if $S(a) \in MIN(\tau(S,A))$, then $\overline{S(a)} \in MAX(\tau (C(S,A)))$, for $a\in A$. Let $S(a) \in MIN(\tau (S,A))$. Then for $b\in A$, $\nexists ~~S(b) \in \tau(S,A)$ such that $S(b)\subset S(a)$ and $S(b) \neq \Tilde{\phi}$. So, $\nexists ~\overline{S(b)} \in \tau(C(S,A))$ such that $\overline{S(b)}\supset \overline{S(a)}$ and $\overline{S(b)} \neq \Tilde{X} $. Therefore, $\overline{S(a)} \in MAX(\tau(C(S,A)))$. \\

     Conversely, let $\overline{S(a)} \in MAX(\tau(C(S,A)))$, for $a\in A$.  Then, for $b\in A$, $\nexists ~\overline{S(b)} \in \tau(C(S,A))$ such that $\overline{S(b)}\supset \overline{S(a)}$ and $ \overline{S(b)} \neq \Tilde{X}$. So, $\nexists ~~S(b) \in \tau(S,A)$ such that $S(b)\subset S(a)$ and $S(b) \neq \Tilde{\phi}$. Hence, $S(a) \in MIN(\tau (S,A))$.
        
\end{proof}
\begin{definition}
    A fuzzy soft set $(S,A)$ is called fuzzy empty soft set if $\tau (S,A)= \phi$ i.e., the set of parameter is empty and it is denoted by $(-,\phi)$.
\end{definition}

\begin{definition}
     For a non-empty set of parameter $A$, the fuzzy soft set $(S,A)$ given over a universal set $X$ is called a fuzzy null soft set if $\tau(S,A)=\{\tilde{\phi} \}$ and it is denoted by the symbol $(\phi , -)$.
\end{definition}

\begin{definition}
     For a non-empty set of parameter $A$, the fuzzy soft set $(S,A)$ given over a universal set $X$ is called a fuzzy absolute soft set if $\tau(S,A)=\{\tilde{X} \}$ and it is denoted by the symbol $(X , -)$.
\end{definition}

\begin{theorem}
    Let $(S,A)$ be a fuzzy soft set defined over a universal set $X$, then $C(C(S,A))=(S,A)$.
    
\end{theorem}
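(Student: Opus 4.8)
The plan is to unwind the definition of the complement operation (Definition~4.15) twice and then invoke the definition of equality of fuzzy soft sets (Definition~4.2). First I would observe that the complement operation does not alter the parameter set: if $(S,A)$ is a fuzzy soft set, then $C(S,A)$ is again a fuzzy soft set with the \emph{same} parameter set $A$, whose mapping sends each $a\in A$ to $\overline{S(a)}$. Applying $C$ once more, $C(C(S,A))$ is therefore a fuzzy soft set over $X$ with parameter set $A$ whose mapping sends $a\in A$ to $\overline{\overline{S(a)}}$.

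Next I would check that the two mappings agree pointwise. Fix $a\in A$; by Definition~1.12 the fuzzy complement satisfies $\mu_{\overline{S(a)}}(x)=1-\mu_{S(a)}(x)$ for every $x\in X$, hence
\[
\mu_{\overline{\overline{S(a)}}}(x)=1-\mu_{\overline{S(a)}}(x)=1-\bigl(1-\mu_{S(a)}(x)\bigr)=\mu_{S(a)}(x),\qquad \forall x\in X.
\]
By Definition~1.10 (equality of fuzzy sets) this gives $\overline{\overline{S(a)}}=S(a)$, i.e.\ the double-complement identity $\overline{\overline{\mathcal{A}}}=\mathcal{A}$ already recorded in the proof of Theorem~4.2. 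Thus the mapping underlying $C(C(S,A))$ coincides with $S$ on all of $A$.

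Finally, since $C(C(S,A))$ and $(S,A)$ have the same parameter set $A$ and their mappings agree for every $a\in A$, Definition~4.2 yields $C(C(S,A))=(S,A)$, completing the proof. I do not expect a genuine obstacle here; the only point requiring care is to argue equality in the strict sense of Definition~4.2 (same map \emph{and} same parameter set) rather than merely equivalence in the sense of Definition~4.3, and to make explicit that the complement operation leaves the parameter set untouched, which is what makes the parameter-set condition automatic.
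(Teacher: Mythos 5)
Your proposal is correct and follows essentially the same route as the paper's proof: apply the complement definition twice, use the involution $\overline{\overline{\mathcal{A}}}=\mathcal{A}$ pointwise, and conclude equality since the parameter set is unchanged. The only difference is that you verify the double-complement identity explicitly from the membership function, whereas the paper simply cites it as known.
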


\begin{proof}

 We know that for a fuzzy set $\mathcal{A}$ defined over $X$, $\overline{\overline{\mathcal{A}}}=\mathcal{A}$ holds.

 Now, consider the fuzzy soft set $(S,A)=\{(a, S(a)):a \in A\}$, where $S(a)$ is a fuzzy set. Then, the complement of $(S,A)$ is $C(S,A)=\{(a, \overline{S(a)}): a \in A\}$. Again taking complement of the fuzzy soft set $C(S,A)$, we get $C(C(S,A))=\{(a, \overline{\overline{S(a)}}): a \in A\}=\{(a, S(a)): a \in A\}=(S,A)$. Therefore, $C(C(S,A))=(S,A)$.\\
\end{proof}

Before we proceed to prove the following theorem, it is essential to revisit some concepts related to the set of parameters involving in soft sets, as outlined by Molodtsov in \cite{3} . Molodtsov \cite{3} emphasized that within soft set theory, parameters are primarily used to identify a specific subset. Therefore, the role of parameters in defining soft sets as well as fuzzy soft set is merely auxiliary. Hence, for the fuzzy soft sets $(S,A)$, $(G,B)$ and $(H,C)$; the sets of parameters $A\times B$ and $B\times A$ are indistinguishable. Similarly the sets of parameters $A\times B \times C$,  $(A\times B) \times C$ and $A\times (B \times C)$ are indistinguishable.\\

\begin{theorem}
    Let $(S,A)$, $(G,B)$ and $(H,C)$ be three fuzzy soft sets defined over a universal set $X$. Then, the following laws hold:

(a) Commutative law:

    ~~~(i) $(S,A) \cap (G,B) = (G,B) \cap (S,A)$

    ~~~(ii) $(S,A) \cup (G,B) = (G,B) \cup (S,A)$

    ~~~(iii) $(S,A) \times (G,B) = (G,B) \times (S,A)$

    ~~~(iv) $(S,A) + (G,B) = (G,B) + (S,A)$

    (b) Associative law:

    ~~~(i) $\{(S,A) \cap (G,B)\}\cap (H,C) = (S,A) \cap \{(G,B)\cap (H,C)\}$

    ~~~(ii) $\{(S,A) \cup (G,B)\}\cup (H,C) = (S,A) \cup \{(G,B)\cup (H,C)\}$

    ~~~(iii) $\{(S,A) \times (G,B)\} \times (H,C) = (S,A) \times \{(G,B) \times (H,C)\}$

    ~~~(iv) $\{(S,A) + (G,B)\} + (H,C) = (S,A) + \{(G,B) + (H,C)\}$

    (c) De Morgan's law:

    ~~~(i) $C((S,A) \cap (G,B)) = C(S,A) \cup C(G,B)$

    ~~~(ii) $C((S,A) \cup (G,B)) = C(S,A) \cap C(G,B)$

    (d) Distributive law:

    ~~~(i) $(S,A)\cup \{(G,B) \cap (H,C)\} = \{(S,A) \cup (G, B)\} \cap \{(S,A) \cup (H,C)\}$

    ~~~(ii) $(S,A)\cap \{(G,B) \cup (H,C)\} = \{(S,A) \cap (G,B)\} \cup \{(S,A) \cap (H,C)\}$

\end{theorem}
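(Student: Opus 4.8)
The plan is to reduce each of the claimed identities to an identity between membership functions, all of which take values in $[0,1]$, and then to invoke the remark preceding the theorem that the parameter sets $A\times B$ and $B\times A$, and likewise $(A\times B)\times C$, $A\times(B\times C)$ and $A\times B\times C$, are indistinguishable. Concretely, I would rely on three elementary facts about $[0,1]$: first, that $\max$, $\min$, ordinary multiplication, and the algebraic-sum operation $s(u,v)=u+v-uv$ are each commutative and associative (for $s$ one checks $s(s(u,v),w)=u+v+w-uv-uw-vw+uvw$, which is symmetric in $u,v,w$); second, the fuzzy De Morgan identities, which amount to $1-\min(u,v)=\max(1-u,1-v)$ and $1-\max(u,v)=\min(1-u,1-v)$; and third, the lattice distributive law $\max(u,\min(v,w))=\min(\max(u,v),\max(u,w))$ together with its dual. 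Given these, every clause becomes a short verification on corresponding parameters.

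For part (a), take $\cap$ as the model: $(S,A)\cap(G,B)$ is, by definition, the fuzzy soft set on $A\times B$ assigning $S(a)\cap G(b)$ to $(a,b)$, while $(G,B)\cap(S,A)$ assigns $G(b)\cap S(a)$ to $(b,a)\in B\times A$; since $S(a)\cap G(b)=G(b)\cap S(a)$ as fuzzy sets and $A\times B$ is indistinguishable from $B\times A$, the two fuzzy soft sets coincide. Clauses (ii), (iii), (iv) are identical, using symmetry of $\max$, of multiplication, and of $s$. For part (b), expanding $\{(S,A)\cap(G,B)\}\cap(H,C)$ gives the value $(S(a)\cap G(b))\cap H(c)$ on $(A\times B)\times C$ and $(S,A)\cap\{(G,B)\cap(H,C)\}$ gives $S(a)\cap(G(b)\cap H(c))$ on $A\times(B\times C)$; associativity of $\min$ and indistinguishability of the two triple products finish it, and the $\cup$, $\times$, $+$ cases are the same with $\max$, multiplication, and $s$.

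Part (c) is the cleanest, because the complement operation does not alter the parameter set: both $C((S,A)\cap(G,B))$ and $C(S,A)\cup C(G,B)$ are supported on $A\times B$, the former sending $(a,b)$ to $\overline{S(a)\cap G(b)}$ and the latter to $\overline{S(a)}\cup\overline{G(b)}$, and these agree by the fuzzy De Morgan identity; the second clause is the exact dual, using $\overline{S(a)\cup G(b)}=\overline{S(a)}\cap\overline{G(b)}$. Here the equality holds in the strict sense of equal fuzzy soft sets.

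The step I expect to be the real obstacle is part (d). Unlike the other clauses, the two sides are not supported on parameter sets that the preceding remark identifies: $(S,A)\cup\{(G,B)\cap(H,C)\}$ lives on $A\times(B\times C)$, whereas $\{(S,A)\cup(G,B)\}\cap\{(S,A)\cup(H,C)\}$ lives on $(A\times B)\times(A\times C)$, a product containing two independent copies of $A$. To make the identity meaningful I would restrict the second expression to the diagonal copy of $A$ in $A\times A$ (equivalently, read the equality in the weak sense sanctioned by Molodtsov's principle that parameters are merely auxiliary names for the fuzzy sets they index); on that diagonal the value $(S(a)\cup G(b))\cap(S(a)\cup H(c))$ collapses to $S(a)\cup(G(b)\cap H(c))$ by the lattice distributive law, matching the left-hand side term by term, and the dual argument handles clause (ii). Making this identification precise, and being explicit that it is what the statement means, is the only genuinely delicate point; everything else is bookkeeping over the factors of the parameter products.
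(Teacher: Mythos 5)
Your proposal is correct and takes essentially the same route as the paper: reduce each identity to the corresponding pointwise identity on membership values ($\max$, $\min$, product, algebraic sum, and $1-(\cdot)$), and invoke the indistinguishability of the parameter sets $A\times B$ and $B\times A$ (respectively of the triple products) to conclude. The one place you diverge is part (d), and there you are in fact more careful than the paper: the paper simply declares $A\times(B\times C)$ and $(A\times B)\times(A\times C)$ to be indistinguishable and equates $P(a,(b,c))$ with $Q((a,b),(a,c))$, whereas you correctly flag that the second parameter set carries two independent copies of $A$ and that the identity only makes sense after restricting to the diagonal (or reading equality in the weak, $\tau$-level sense) --- a gap in the paper's own argument that your proposal makes explicit rather than introduces.
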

\begin{proof}

 Here, we only prove the first part of each property. To prove these properties, we consider the following fuzzy soft sets $(S,A)=\{(a, S(a)): a\in A\}$, $(G,B)=\{(b, G(b)): b\in B\}$ and  $(H,C)=\{(c, H(c)): c\in C\}$ defined over a universal set $X$.\\

(a) (i) Let us consider two fuzzy soft sets $(S,A) \cap (G,B)=(I_1, A\times B)$ and $(G,B) \cap (S,A)=(I_2, B\times A)$. Then for any $a\in A$ and $b\in B$, we have $I_1(a,b)=S(a) \cap G(b)$ and $I_2(b,a)=G(b) \cap S(a)$. Since $S(a)$ and $G(b)$ are fuzzy sets, so we get $S(a) \cap G(b)=G(b) \cap S(a)$. Also the set of parameters $A\times B$ and $B\times A$ are indistinguishable. Hence from above, we get $I_1(a,b)= I_2(b,a)$, $\forall a \in A $ and $b\in B$. Therefore, $(S,A) \cap (G,B) = (G,B) \cap (S,A)$.\\

(b) (i) For any $a\in A$, $b\in B$ and $c\in C$, the fuzzy sets $S(a)\in \tau (S,A)$, $G(b)\in \tau (G,B)$ and $H(c)\in \tau (H,C)$. Let $\{(S,A) \cap (G,B)\}\cap (H,C)= (P, (A\times B)\times C)$ be the fuzzy soft set defined by $P((a,b),c)= \{S(a)\cap G(b)\}\cap H(c)$. Again let, $(S,A) \cap \{(G,B)\cap (H,C)\}= (Q, A\times (B\times C))$ be the fuzzy soft set defined by $Q(a,(b,c))= S(a)\cap \{G(b)\cap H(c)\}$.

Since the set of parameters $A\times B \times C$,  $(A\times B) \times C$ and $A\times (B \times C)$ are indistinguishable, so $((a,b),c)\in (A\times B)\times C$ and $(a,(b,c)) \in A\times(B\times C)$ are indistinguishable. Also the associative property hold for three fuzzy sets $S(a)$, $G(b)$ and $H(c)$. Therefore, $\{S(a)\cap G(b)\}\cap H(c)=S(a)\cap \{G(b)\cap H(c)\}$. Thus we get, $  P((a,b),c)= Q(a,(b,c))$, $\forall ~a\in A$, $b\in B$ and $c\in C$. So,  $\{(S,A) \cap (G,B)\}\cap (H,C) = (S,A) \cap \{(G,B)\cap (H,C)\}$.\\

(c) (i) We know that, any two fuzzy sets $\mathcal{A}$ and $\mathcal{B}$ satisfy the De Morgan's law, i.e., $\overline{\mathcal{A} \cap  \mathcal{B}} = \overline{\mathcal{A}}\cup \overline{\mathcal{B}}$ and $\overline{\mathcal{A} \cup  \mathcal{B}} = \overline{\mathcal{A}}\cap \overline{\mathcal{B}}$.

Let $(S,A) \cap (G,B)=(P,A\times B)$ be a fuzzy soft set. Then, for any $(a,b)\in A\times B$, we have $P(a,b)=S(a)\cap G(b)$. Again, if we take complement of the fuzzy soft set $(P,A\times B)$, then the fuzzy soft set $C(P,A\times B)$ can be defined as $CP(a,b)=\overline{S(a)\cap G(b)}$. Thus we get $ C((S,A) \cap (G,B))=C(P,A\times B)=\{((a, b), \overline{S(a)\cap G(b)}): a\in A, b\in B\}$.

Similarly, if we take $C(S,A) \cup C(G,B)=(Q, A\times B)$ defined by $Q(a,b)=\overline{S(a)} \cup \overline{G(b)}$, then we get, $C(S,A) \cup C(G,B)=(Q, A\times B)=\{((a, b), \overline{S(a)}\cup \overline{G(b)}): a\in A, b\in B\}$.

 Since, $S(a)$ and $G(b)$ are fuzzy sets, so $\overline{S(a)\cap G(b)}=\overline{S(a)}\cup \overline{G(b)}$. Hence, we get $C((S,A) \cap (G,B)) = C(S,A) \cup C(G,B)$.\\

(d) (i) We know that the distribution property of union holds for any three fuzzy sets.

 Let $(S, A)\cup \{(G, B) \cap (H,C)\}=(P, A\times(B\times C))$ and $\{(S, A) \cup (G,B)\} \cap \{(S, A) \cup (H,C)\}=(Q,(A\times B)\times (A\times C))$. For any $(a, (b, c))\in A\times (B\times C)$, we define these fuzzy soft set as $P(a,(b,c))=S(a)\cup\{G(b)\cap H(c)\}$ and $Q((a,b),(a,c))=\{S(a)\cup G(b)\}\cap \{S(a)\cup H(c)\}$.

 But, in the soft set theory the two set of parameters $A\times(B\times C)$ and $(A\times B)\times (A\times C)$ are indistinguishable. Also for three fuzzy sets $S(a)$, $G(b)$ and $H(c)$ we get $S(a)\cup\{G(b)\cap H(c)\}=\{S(a)\cup G(b)\}\cap \{S(a)\cup H(c)\}$. Hence, $P(a,(b,c))=Q((a,b),(a,c))$, $\forall (a,b,c)\in A\times B\times C$. This implies that $(S,A)\cup \{(G,B) \cap (H,C)\} = \{(S,A) \cup (G,B)\} \cap \{(S,A) \cup (H,C)\}$.
    
\end{proof}

\begin{theorem}
 For any fuzzy non-empty soft set $(S,A)$ defined over a universal set $X$ with fuzzy null soft set $(\phi , -)$ and fuzzy absolute soft set $(X , -)$ over the same universal set $X$, the following properties hold: 

 (i) $C(\phi, -) \cong (X, -)$,
 
 (ii) $C(X, -) \cong (\phi, -)$,
 
 (iii) $(S, A) \cap (\phi, -) \cong (\phi, -)$,

 (iv) $(S, A) \cup (\phi, -) \cong (S, A)$,

 (v) $(S, A) \times (\phi, -) \cong (\phi, -)$,

 (vi) $(S, A) + (\phi, -) \cong (S, A)$,

 (vii) $(S, A) \cap (X, -) \cong (S, A)$,

 (viii) $(S, A) \cup (X, -) \cong (X, -)$,

 (ix) $(S, A) \times (X, -) \cong (S, A)$,

 (x) $(S, A) + (X, -) \cong (X, -)$.

\end{theorem}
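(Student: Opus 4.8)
The plan is to unravel each of the ten claims directly at the level of membership functions, exploiting the fact that $\cong$ requires only equality of the families $\tau(\cdot)$, not equality of the soft maps themselves. First I would fix notation: write the fuzzy null soft set as $(\phi,-)=(N,B)$ with $N(b)=\tilde{\phi}$ for every $b\in B$, and the fuzzy absolute soft set as $(X,-)=(M,B)$ with $M(b)=\tilde{X}$ for every $b\in B$, where $A$ and $B$ are non-empty by the standing hypotheses on these objects. This non-emptiness guarantees $A\times B\neq\phi$, which is precisely what is needed so that the image families in (iii), (v), (viii), (x) come out as the singletons $\{\tilde{\phi}\}$ or $\{\tilde{X}\}$ rather than as the empty family.

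For (i) and (ii) I would simply apply the complement definition $CS(a)=\overline{S(a)}$ together with $\overline{\tilde{\phi}}=\tilde{X}$ and $\overline{\tilde{X}}=\tilde{\phi}$, both immediate from $\mu_{\overline{\mathcal{A}}}(x)=1-\mu_{\mathcal{A}}(x)$. Then $\tau(C(\phi,-))=\{\tilde{X}\}=\tau(X,-)$ and $\tau(C(X,-))=\{\tilde{\phi}\}=\tau(\phi,-)$, giving the two equivalences.

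For (iii)--(x) the computation follows a single pattern: evaluate $\mu$ of the relevant pointwise combination of $S(a)$ with $\tilde{\phi}$ or $\tilde{X}$. For all $a\in A$, $b\in B$, $x\in X$ one has $\mu_{S(a)\cap N(b)}(x)=\min\{\mu_{S(a)}(x),0\}=0$ and $\mu_{S(a)\cap M(b)}(x)=\min\{\mu_{S(a)}(x),1\}=\mu_{S(a)}(x)$; likewise $\mu_{S(a)\cup N(b)}(x)=\max\{\mu_{S(a)}(x),0\}=\mu_{S(a)}(x)$ and $\mu_{S(a)\cup M(b)}(x)=\max\{\mu_{S(a)}(x),1\}=1$; for the algebraic product $\mu_{S(a)\cdot N(b)}(x)=\mu_{S(a)}(x)\cdot 0=0$ and $\mu_{S(a)\cdot M(b)}(x)=\mu_{S(a)}(x)\cdot 1=\mu_{S(a)}(x)$; and for the algebraic sum $\mu_{S(a)\hat{+}N(b)}(x)=\mu_{S(a)}(x)+0-0=\mu_{S(a)}(x)$ and $\mu_{S(a)\hat{+}M(b)}(x)=\mu_{S(a)}(x)+1-\mu_{S(a)}(x)=1$. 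Substituting these into the definitions of $\cap$, $\cup$, $\times$ and $+$, and noting that in the cases reducing to $S(a)$ the value is independent of $b$ while $B\neq\phi$, the family of the left-hand side collapses to $\{\tilde{\phi}\}$ in (iii) and (v), to $\{\tilde{X}\}$ in (viii) and (x), and to $\{S(a):a\in A\}=\tau(S,A)$ in (iv), (vi), (vii) and (ix). Comparing with $\tau(\phi,-)=\{\tilde{\phi}\}$, $\tau(X,-)=\{\tilde{X}\}$ and $\tau(S,A)$ respectively yields all ten equivalences.

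The step requiring the most care is conceptual rather than computational: the bookkeeping of parameter sets. One must resist claiming literal equality of soft sets and settle for $\tau$-equality, and one must explicitly invoke the non-emptiness of $A$ and $B$ so that $A\times B$ is non-empty and the resulting image families are genuine singletons $\{\tilde{\phi}\}$ or $\{\tilde{X}\}$ — an empty parameter set would instead produce the fuzzy empty soft set $(-,\phi)$, a distinct object. Everything else is a one-line check against the membership-function definitions of union, intersection, algebraic product and algebraic sum of fuzzy sets.
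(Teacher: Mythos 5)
Your proposal is correct and follows essentially the same route as the paper: both arguments compute the resultant parametrized family for each operation, reduce each image fuzzy set to $\tilde{\phi}$, $\tilde{X}$, or $S(a)$ via the membership-function definitions, and then compare the families $\tau(\cdot)$ to establish $\cong$. Your explicit attention to the non-emptiness of the parameter sets and to the distinction between $\tau$-equality and literal equality is a welcome refinement of the same argument, not a different one.
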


\begin{proof}
 To prove the above results, we first consider 
$(S, A)=\{(a,S(a)): a\in A\}$, $(\phi, -)=\{(b,  \Tilde{\phi}): b\in B\}$ and $(X, -)=\{(c, \Tilde{X}): c\in C\}$, where $B$ and $C$ are the sets of parameters of $(\phi, -)$ and $(X, -)$ respectively. This implies that $\tau(S, A)=\{S(a):a \in A\}$, $\tau(\phi, -)=\{\Tilde{\phi} \}$ and $\tau(X, -)=\{\Tilde{X}\}$.\\

(i) The complement of the fuzzy null soft set $(\phi, -)$ is $C(\phi, -)=\{(b,  \overline{\Tilde{\phi}}): b\in B\}= \{(b, \Tilde{X}): b\in B\}$ and $\tau(C(\phi, -))=\{\Tilde{X}\}=\tau(X, -)$. This implies that $C(\phi, -) \cong (X, -)$.\\

(ii) We have, $C(X, -)=\{(c,  \overline{\Tilde{X}}): c\in C\}= \{(c, \Tilde{\phi}): c\in C\}$. Therefore, $\tau(C(X, -))=\{\Tilde{\phi}\}=\tau(\phi, -)$. Hence, $C(X, -) \cong (\phi, -)$. \\

(iii) We have, $(S, A)\cap (\phi, -)=\{((a, b), S(a)\cap \Tilde{\phi}) : a\in A, b\in B\}=\{((a,b), \Tilde{\phi}): a\in A, b\in B\}$. So, we get $\tau((S, A)\cap (\phi, -))=\{\Tilde{\phi} \}=\tau(\phi, -)$. Hence, $(S, A) \cap (\phi, -) \cong (\phi, -)$.\\

(iv) We have, $(S, A)\cup (\phi, -)=\{((a,b), S(a)\cup \Tilde{\phi}) : a\in A, b\in B\}=\{((a,b), S(a)): a\in A, b\in B\}$. Therefore, $\tau((S,A)\cup (\phi, -))=\{S(a): a\in A\}=\tau(S,A)$. Hence, $(S, A) \cup (\phi, -) \cong (S, A)$.\\

(v) We have, $(S,A) \times (\phi, -)=\{((a,b), S(a).\Tilde{\phi}) : a\in A, b\in B\}=\{((a,b), \Tilde{\phi}): a\in A, b\in B\} $. This implies that $\tau((S,A) \times (\phi, -))=\{\Tilde{\phi} \}=\tau(\phi, -)$. Hence, $(S, A) \times (\phi, -) \cong (\phi, -)$.\\

(vi) We have, $(S,A) + (\phi, -)=\{((a,b), S(a) \hat{+} \Tilde{\phi}) : a\in A, b\in B\}=\{((a,b), S(a)): a\in A, b\in B\}$. So, $\tau((S,A) + (\phi, -))=\{S(a): a\in A\}=\tau(S,A)$. Hence, $(S, A) + (\phi, -) \cong (S, A)$.\\

(vii) We have, $(S, A)\cap (X, -)=\{((a, c), S(a)\cap \Tilde{X}) : a\in A, c\in C\}=\{((a,c), S(a)): a\in A, b\in B\}$. Thus, we get $\tau((S, A)\cap (X, -))=\{S(a) : a\in A \}=\tau(S, A)$. Hence, $(S, A) \cap (X, -) \cong (S, A)$.\\

(viii) We have, $(S, A)\cup (X, -)=\{((a,c), S(a)\cup \Tilde{X}) : a\in A, c\in C\}=\{((a,c), \Tilde{X}): a\in A, c\in C\}$. Therefore, $\tau((S,A)\cup (X, -))=\{\Tilde{X}\}=\tau(X, -)$. Hence, $(S, A) \cup (X, -) \cong (X, -)$.\\

(ix) We have, $(S,A) \times (X, -)=\{((a,c), S(a).\Tilde{X}) : a\in A, c\in C\}=\{((a,c), S(a)): a\in A, c\in C\} $. This implies that $\tau((S,A) \times (X, -))=\{S(a) : a\in A \}=\tau(S, A)$. Hence, $(S, A) \times (X, -) \cong (S, A)$.\\

(x) We have, $(S,A) + (X, -)=\{((a,c), S(a) \hat{+} \Tilde{X}) : a\in A, c\in C\}=\{((a,c), \Tilde{X}): a\in A, c\in C\}$. It implies that, $\tau((S,A) + (X, -))=\{\Tilde{X}\}=\tau(X, -)$. Hence, $(S, A) + (X, -) \cong (X, -)$.\\
\end{proof}
\begin{theorem}
     Let $(S,A)$ and $(G,B)$ be two fuzzy soft sets defined over a universal set $X$ such that $(S,A)$ and $(G,B)$ are not fuzzy empty soft set, fuzzy null soft set and fuzzy absolute soft set. Again, let $(U,A\times B)$ and $(I,A\times B)$ be the fuzzy soft union and fuzzy soft intersection of $(S,A)$ and $(G,B)$ respectively. Then, the following properties hold:\\

    (i) if $S(a)\cup G(b)\in MAX(\tau(U,A\times B))$, for $(a, b)\in A\times B$, then $S(a)\in MAX(\tau(S,A))$ or $G(b)\in MAX(\tau(G,B))$,\\

    (ii) if $S(a)\cap G(b)\in MAX(\tau(I,A\times B))$, for $(a, b)\in A\times B$, then $S(a)\in MAX(\tau(S,A))$ or $G(b)\in MAX(\tau(G,B))$,\\
    
    (iii) if $S(a)\cap G(b)\in MIN(\tau(I,A\times B))$, for $(a, b)\in A\times B$, then $S(a)\in MIN(\tau(S,A))$ or $G(b)\in MIN(\tau(G,B))$,\\

    (iv) if $S(a)\cup G(b)\in MIN(\tau(U,A\times B))$, for $(a, b)\in A\times B$, then $S(a)\in MIN(\tau(S,A))$ or $G(b)\in MIN(\tau(G,B))$.

\end{theorem}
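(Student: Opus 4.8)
The plan is to prove all four parts by contraposition, after first noting that complementation collapses the problem to two essentially distinct statements. Replacing $(S,A)$ and $(G,B)$ by their complements $C(S,A)$ and $C(G,B)$ turns $\cup$ into $\cap$ and vice versa, by the De Morgan law for fuzzy soft sets established above (so that, for instance, the union of $C(S,A)$ and $C(G,B)$ has family $\{\overline{S(a)\cap G(b)}:(a,b)\in A\times B\}=\tau(C(I,A\times B))$), and it simultaneously interchanges $MAX$ and $MIN$, by the earlier theorem asserting that $\overline{S(a)}\in MAX(\tau(C(S,A)))$ precisely when $S(a)\in MIN(\tau(S,A))$, and dually. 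Since the hypotheses ``not fuzzy empty, null, or absolute'' are themselves preserved under complementation, applying (i) to $C(S,A)$ and $C(G,B)$ and translating back yields (iii), while applying (ii) to them yields (iv). Hence it suffices to prove (i) and (ii).

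For (i) I would argue the contrapositive: assuming $S(a)\notin MAX(\tau(S,A))$ and $G(b)\notin MAX(\tau(G,B))$, I must show $S(a)\cup G(b)\notin MAX(\tau(U,A\times B))$. First dispose of the trivial case: if $S(a)=\tilde{X}$ or $G(b)=\tilde{X}$ then $S(a)\cup G(b)=\tilde{X}$, which by definition lies outside $MAX(\tau(U,A\times B))$. Otherwise $S(a)\neq\tilde{X}$ and $G(b)\neq\tilde{X}$, so directly from the definition of $MAX$ there are fuzzy sets $S(a')\in\tau(S,A)$ and $G(b')\in\tau(G,B)$ with $S(a')\supset S(a)$, $G(b')\supset G(b)$, $S(a')\neq\tilde{X}$ and $G(b')\neq\tilde{X}$. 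The element $U(a',b')=S(a')\cup G(b')$ of $\tau(U,A\times B)$ then contains $S(a)\cup G(b)$ pointwise, and the goal is to upgrade this to a strict inclusion with $S(a')\cup G(b')\neq\tilde{X}$, which would contradict the maximality of $S(a)\cup G(b)$; if $U(a',b')$ itself does not suffice, the elements $U(a',b)$ and $U(a,b')$ are the natural fallback witnesses.

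The argument for (ii) runs in parallel with $\cap$ in place of $\cup$: from the same hypotheses one produces $S(a'),G(b')$ as above and considers $I(a',b')=S(a')\cap G(b')$, which contains $S(a)\cap G(b)$. Here the condition $\neq\tilde{X}$ is automatic, since $S(a')\cap G(b')\subseteq S(a')\neq\tilde{X}$, so only the strict inclusion needs to be secured. Parts (iii) and (iv) then follow from (i) and (ii) by the complementation device of the first paragraph, so no further direct work is needed.

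The step I expect to be the crux — and essentially the entire content of the proof — is promoting the pointwise inclusion to a \emph{strict} one after the $\max$ (respectively $\min$) has been applied. The obstacle is that a point $x_{0}$ at which $S(a')$ strictly exceeds $S(a)$ may be a point at which $G(b)$ already dominates both, so that the gain is absorbed and $S(a')\cup G(b')$ agrees with $S(a)\cup G(b)$ there; the same can happen simultaneously for $G(b')$ versus $G(b)$ and for each of the three candidate witnesses. Identifying exactly which hypothesis (the exclusion of the fuzzy null and fuzzy absolute soft sets, or some non-degeneracy of $(U,A\times B)$ or $(I,A\times B)$ themselves) rules this out, and otherwise organising a careful case analysis over the subsets of $X$ on which the various inclusions among $S(a),S(a'),G(b),G(b')$ are strict, is where the real work — and, I suspect, the real difficulty — lies.
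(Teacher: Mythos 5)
Your reduction of (iii) and (iv) to (i) and (ii) by complementation is valid and is a genuinely different organization from the paper's: the paper proves (i) and (iii) by two separate, parallel direct arguments and simply omits (ii) and (iv), whereas your use of the $MIN$/$MAX$ duality under complement together with De Morgan's law halves the work and is the cleaner route. The hypotheses are indeed preserved, since the complement of a fuzzy null soft set is absolute and conversely. Up to and including the disposal of the $\tilde{X}$ cases, your contrapositive argument for (i) also matches the paper's.

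The crux you isolated, however --- upgrading $S(a)\cup G(b)\subseteq S(a')\cup G(b')$ to a strict inclusion --- is not a technicality to be settled by a more careful case analysis: it cannot be closed, because the statement is false. Take $X=\{x,y\}$, $\tau(S,A)=\bigl\{\{\frac{x}{0.3},\frac{y}{0.8}\},\{\frac{x}{0.5},\frac{y}{0.8}\}\bigr\}$ and $\tau(G,B)=\bigl\{\{\frac{x}{0.6},\frac{y}{0.5}\},\{\frac{x}{0.6},\frac{y}{0.7}\}\bigr\}$. Then $S(a_1)=\{\frac{x}{0.3},\frac{y}{0.8}\}\notin MAX(\tau(S,A))$ and $G(b_1)=\{\frac{x}{0.6},\frac{y}{0.5}\}\notin MAX(\tau(G,B))$, yet all four unions $S(a_i)\cup G(b_j)$ equal $\{\frac{x}{0.6},\frac{y}{0.8}\}$, so $\tau(U,A\times B)$ is a singleton different from $\tilde{X}$ and $S(a_1)\cup G(b_1)\in MAX(\tau(U,A\times B))$. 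This refutes (i); note that your fallback witnesses $U(a',b)$ and $U(a,b')$ coincide with $U(a,b)$ here, so they cannot rescue the argument. Replacing $\tau(G,B)$ by $\bigl\{\{\frac{x}{0.2},\frac{y}{0.9}\},\{\frac{x}{0.2},\frac{y}{0.95}\}\bigr\}$ makes all four intersections equal to $\{\frac{x}{0.2},\frac{y}{0.8}\}$ and refutes (ii), and by your own duality (iii) and (iv) fall with them. The paper's proof commits exactly the error you anticipated: in the final case of (i) it asserts that $S(a_j)\supset S(a)$ and $G(b_j)\supset G(b)$ give $S(a)\cup G(b)\subset S(a_j)\cup G(b_j)$, when only $\subseteq$ follows, and the example above shows equality can hold for every available pair simultaneously. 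So your plan correctly locates the fatal step; the honest conclusion is that the theorem needs an additional hypothesis or a weakened conclusion before any proof can exist.
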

\begin{proof}

 We only proof (i) and (iii). \\

(i) Let $S(a)\cup G(b)(\neq \Tilde{X}) \in MAX(\tau(U,A\times B))$ for $(a, b)\in A\times B$. This implies that there does not exist any $S(a_i)\cup G(b_i)(\neq \Tilde{X})\in \tau(U,A\times B)$, for $(a_i, b_i)\in A\times B$ such that $S(a_i)\cup G(b_i)\supset S(a)\cup G(b)$. Suppose the contrary, $S(a) \notin MAX(\tau(S,A))$ and $G(b) \notin MAX(\tau(G,B))$. It implies $S(a)= \Tilde{X}$ or  $S(a_j)\supset S(a)(\neq \Tilde{X})$, for some $S(a_j) (\neq \Tilde{X}) \in \tau(S,A)$, and $G(b)= \Tilde{X}$ or $G(b_j)\supset G(b)(\neq \Tilde{X})$, for some  $G(b_j) (\neq \Tilde{X}) \in \tau(G,B)$. Now, if we consider $S(a)= \Tilde{X}$ and $G(b)= \Tilde{X}$ then $S(a) \cup G(b)= \Tilde{X} \cup \Tilde{X}=\Tilde{X}\notin MAX(\tau(U,A\times B))$, a contradiction. If $S(a)= \Tilde{X}$ and $G(b_j)\supset G(b)(\neq \Tilde{X})$ then $S(a) \cup G(b) = \Tilde{X} \cup G(b)= \Tilde{X} \notin MAX(\tau(U,A\times B))$, a contradiction. By similar process we can show that, if $S(a_j)\supset S(a)(\neq \Tilde{X})$ and $G(b)= \Tilde{X}$, then $S(a)\cup G(b)= \Tilde{X} \notin MAX(\tau(U,A\times B))$, which is again a contradiction. Finally, if we consider $S(a_j)\supset S(a)(\neq \Tilde{X})$ and $G(b_j)\supset G(b)(\neq \Tilde{X})$, it gives $S(a) \cup G(b)  \subset S(a_j)\cup G(b_j) \in \tau(I, A\times B)$, this means $S(a)\cup G(b) \notin MAX(\tau(U,A\times B))$, a contradiction. All these scenarios contradicts the inclusion $S(a)\cup G(b) \in MAX(\tau(U,A\times B))$. Hence, $S(a) \in MAX(\tau(S,A))$ or $G(b) \in MAX(\tau(G,B))$.\\

(iii) Let $S(a)\cap G(b)(\neq \Tilde{\phi})\in MIN(\tau(I,A\times B))$ for $(a, b)\in A\times B$. This implies that there does not exist any $S(a_k)\cap G(b_k)(\neq \Tilde{\phi})\in \tau(I,A\times B)$, for $(a_k, b_k)\in A\times B$ such that $S(a_k)\cap G(b_k)\subset S(a)\cap G(b)$. Now, let us assume that, $S(a) \notin MIN(\tau(S,A))$ and $G(b) \notin MIN(\tau(G,B))$, which implies $S(a)= \Tilde{\phi}$ or $S(a_n)\subset S(a)(\neq \Tilde{\phi})$, for some $S(a_n) (\neq \Tilde{\phi}) \in \tau(S,A)$, and $G(b)= \Tilde{\phi}$ or $G(b_n)\subset G(b)(\neq \Tilde{\phi})$, for some  $G(b_n) (\neq \Tilde{\phi}) \in \tau(G,B)$. Now, if $S(a)= \Tilde{\phi}$ and $G(b)= \Tilde{\phi}$ then $S(a) \cap G(b)= \Tilde{\phi} \cap \Tilde{\phi}=\Tilde{\phi}\notin MIN(\tau(I,A\times B))$, a contradiction. If $S(a)= \Tilde{\phi}$ and $G(b_n)\subset G(b)(\neq \Tilde{\phi})$ then $S(a) \cap G(b)= \Tilde{\phi} \cap G(b)= \Tilde{\phi}\notin MIN(\tau(I,A\times B))$, a contradiction. Using the same method, we can show that if $S(a_n)\supset S(a)(\neq \Tilde{\phi})$ and $G(b)= \Tilde{\phi}$ then $S(a)\cap G(b)\notin MIN(\tau(I,A\times B))$, which is again a contradiction. Finally, if we consider $S(a_n)\supset S(a)(\neq \Tilde{\phi})$ and $G(b_n)\supset G(b)(\neq \Tilde{\phi})$, it gives $S(a_n)\cap G(b_n) \supset S(a)\cap G(b) \in \tau(I, A\times B)$. Thus, $S(a)\cap G(b) \notin MIN(\tau(I,A\times B))$, a contradiction. Thus, based on these four scenarios, we can conclude that $S(a)\in MIN(\tau(S,A))$ or $G(b)\in MIN(\tau(G,B))$.\\
    
\end{proof}

\begin{example}

In general, the converse parts of the theorem 0.2 may not be true. We can consider the following example:\\

Consider the following two fuzzy soft sets $(S, A)$ and $(G, B)$ defined over a universal set $X$, where $X=\{a, b, c\}$, $A=\{x, y, z\}$, $B=\{p, q, r\}$,
$(S,A)=\{(x, \{\frac{a}{0.5}\}), (y, \{\frac{b}{0.4}\}), (z, \{\frac{c}{0.3}\})\}$ and $(G,B)=\{(p, \{\frac{b}{0.5}\}), (q, \{\frac{a}{0.4}\}), (r, \{\frac{c}{0.3}\})\}$.\\

In these two fuzzy soft sets, we get,
$\tau(S, A)=\{\{\frac{a}{0.5}\}, \{\frac{b}{0.4}\}, \{\frac{c}{0.3}\}\}$, $\tau(G, B)=\{\{\frac{a}{0.4}\}, \{\frac{b}{0.5}\}, \{\frac{c}{0.3}\}\}$, $MAX(\tau(S, A))= \{\{\frac{a}{0.5}\}, \{\frac{b}{0.4}\}, \{\frac{c}{0.3}\} \}$, $MIN(\tau(S, A))= \{\{\frac{a}{0.5}\}, \{\frac{b}{0.4}\}, \{\frac{c}{0.3}\} \}$, $MAX(\tau(G, B))= \{ \{\frac{b}{0.5}\}, \{\frac{a}{0.4}\}, \{\frac{c}{0.3}\} \}$, $MIN(\tau(G, B))= \{ \{\frac{b}{0.5}\}, \{\frac{a}{0.4}\}, \{\frac{c}{0.3}\} \}$.\\

Let $(U, A\times B)$ be the union of the fuzzy soft sets $(S,A)$ and $(G,B)$. Then,

$(U, A\times B)=\{((x, p), \{\frac{a}{0.5}, \frac{b}{o.5}\}), ((x, q), \{ \frac{a}{0.5}\}), ((x, r), \{\frac{a}{0.5}, \frac{c}{0.3}\}), ((y, p),\\ \{\frac{b}{0.5}\}), ((y, q), \{\frac{a}{0.4}, \frac{b}{0.4}\}), ((y, r), \{\frac{b}{0.4}, \frac{c}{0.3}\}), ((z, p), \{\frac{b}{0.5}, \frac{c}{0.3}\}), ((z, q), \{\frac{a}{0.4}, \frac{c}{0.3}\}), \\ ((z, r), \{\frac{c}{0.3}\}) $.\\

Now, we get the followings: \\

$\tau(U,A\times B)=\{\{\frac{a}{0.5}, \frac{b}{o.5}\}, \{ \frac{a}{0.5}\}, \{\frac{a}{0.5}, \frac{c}{0.3}\}, \{\frac{b}{0.5}\}, \{\frac{a}{0.4}, \frac{b}{0.4}\}, \{\frac{b}{0.4}, \frac{c}{0.3}\}, \{\frac{b}{0.5},\\ \frac{c}{0.3}\}, \{\frac{a}{0.4}, \frac{c}{0.3}\}, \{\frac{c}{0.3}\}\}$, $MAX(\tau(U,A\times B))=\{\{\frac{a}{0.5}, \frac{b}{o.5}\}, \{\frac{a}{0.5}, \frac{c}{0.3}\}, \{\frac{b}{0.5}, \frac{c}{0.3}\}\}$, $MIN(\tau(U,A\times B))=\{ \{\frac{a}{0.5}\}, \{\frac{b}{0.5}\}, \{\frac{c}{0.3}\} \}$.\\

From above, we can easily find that $S(x)=\{\frac{a}{0.5}\}\in MAX(\tau(S,A))$ and $G(q)=\{\frac{a}{0.4}\}\in MAX(\tau(G,B))$, but $U(x, q)=\{\frac{a}{0.5}\}\notin MAX(\tau(U,A\times B))$. Again, $S(x)=\{\frac{a}{0.5}\}\in MIN(\tau(S,A))$ and $G(p)=\{\frac{b}{0.5}\}\in MIN(\tau(G,B))$ but $U(x, p)=\{ \frac{a}{0.5}, \frac{b}{0.5}\}\notin MIN(\tau(U,A\times B))$.\\

Now, let $(I, A\times B)$ be the intersection of the fuzzy soft sets $(S,A)$ and $(G,B)$. Then, $(I, A\times B)=(S,A)\cap (G,B)=\{((x, p), \phi), ((x, q), \{ \frac{a}{0.4}\}), ((x, r), \phi), ((y, p),\\ \{\frac{b}{0.4}\}), ((y, q), \phi), ((y, r), \phi), ((z, p), \phi), ((z, q), \phi), ((z, r), \{\frac{c}{0.3}\}) $.\\

Here, we get the followings:\\

$\tau(I,A\times B)=\{\{ \frac{a}{0.4}\}, \{\frac{b}{0.4}\}, \{\frac{c}{0.3}\}\}$, $MAX(\tau(U,A\times B))=\{\{ \frac{a}{0.4}\}, \{\frac{b}{0.4}\}, \{\frac{c}{0.3}\}\}$, $MIN(\tau(U,A\times B))=\{\{ \frac{a}{0.4}\}, \{\frac{b}{0.4}\}, \{\frac{c}{0.3}\}\}$.\\

Now, we see that $S(x)=\{\frac{a}{0.5}\}\in MAX(\tau(S,A))$ and $G(p)=\{\frac{b}{0.5}\}\in MAX(\tau(G,B))$, but $I(x, p)=\phi \notin MAX(\tau(I,A\times B))$. Similarly $S(x)=\{\frac{a}{0.5}\}\in MIN(\tau(S,A))$ and $G(p)=\{\frac{b}{0.5}\}\in MIN(\tau(G,B))$, but $I(x, p)=\phi \notin MIN(\tau(I,A\times B))$.
\end{example}

\begin{theorem}
    Let $(S,A)$ be a fuzzy soft set defined over a universal set $X$ such that $S(a_i) \cap S(a_j)=\Tilde{\phi}$, $\forall a_i, a_j \in A$ and $a_i\neq a_j$ and let for each $a_i \in A$, $S(a_i)\neq \Tilde{X}$ and $S(a_i)\neq \Tilde{\phi}$. Then, each fuzzy set $S(a_i)\in \tau(S,A)$ is the maximal as well as minimal element of the family $\tau(S,A)$.
\end{theorem}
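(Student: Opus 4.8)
The plan is to argue directly from the definitions of $MIN(\tau(S,A))$ and $MAX(\tau(S,A))$, exploiting the fact that ``disjointness'' of the fuzzy sets $S(a_i)$ (pairwise intersection equal to $\tilde{\phi}$) prevents any one of them from being a proper fuzzy sub- or super-set of another. First I would record the elementary observation that for any two fuzzy sets $P$ and $Q$ over $X$ with $P \subseteq Q$ one has $P \cap Q = P$; this is immediate from the definition of fuzzy inclusion ($\mu_P(x) \le \mu_Q(x)$ for all $x$) and of fuzzy intersection ($\mu_{P\cap Q}(x) = \min\{\mu_P(x),\mu_Q(x)\}$). I would also note that the hypothesis forces all the values $S(a_i)$ to be pairwise distinct: if $S(a_i)=S(a_j)$ with $a_i \neq a_j$, then $\tilde{\phi}=S(a_i)\cap S(a_j)=S(a_i)$, contradicting $S(a_i)\neq \tilde{\phi}$.

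Next I would fix an arbitrary $a_i \in A$ and show $S(a_i)\in MIN(\tau(S,A))$. By hypothesis $S(a_i)\neq \tilde{\phi}$, so the first clause of the definition of $MIN$ holds. For the second clause, suppose toward a contradiction that there is $D\in\tau(S,A)$ with $D\neq\tilde{\phi}$ and $D\subset S(a_i)$; write $D=S(a_j)$. Since the inclusion is strict, $D\neq S(a_i)$, hence $a_j\neq a_i$, and therefore $S(a_i)\cap S(a_j)=\tilde{\phi}$ by hypothesis. But $S(a_j)\subseteq S(a_i)$ gives $S(a_j)\cap S(a_i)=S(a_j)$ by the elementary observation, so $S(a_j)=\tilde{\phi}$, a contradiction. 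Hence no such $D$ exists and $S(a_i)\in MIN(\tau(S,A))$.

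The argument for $S(a_i)\in MAX(\tau(S,A))$ is the mirror image: $S(a_i)\neq\tilde{X}$ by hypothesis, and if some $D=S(a_j)\in\tau(S,A)$ with $D\neq\tilde{X}$ satisfied $D\supset S(a_i)$, then strictness forces $a_j\neq a_i$, so $S(a_i)\cap S(a_j)=\tilde{\phi}$; but $S(a_i)\subseteq S(a_j)$ yields $S(a_i)\cap S(a_j)=S(a_i)$, whence $S(a_i)=\tilde{\phi}$, contradicting $S(a_i)\neq\tilde{\phi}$. Thus $S(a_i)\in MAX(\tau(S,A))$, and since $a_i$ was arbitrary, every element of $\tau(S,A)$ is simultaneously a minimal and a maximal element.

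There is essentially no hard step here; the only points requiring care are ruling out the possibility that distinct parameters index the same fuzzy set (handled above, though the proof goes through even without this remark once one observes the strict inclusion already forces $a_j \neq a_i$) and making sure the strictness of $\subset$ / $\supset$ is invoked to get $a_j\neq a_i$ \emph{before} applying the disjointness hypothesis. I would close by noting that the two excluded degenerate cases ($S(a_i)=\tilde{\phi}$ or $S(a_i)=\tilde{X}$) are exactly what the definitions of $MIN$ and $MAX$ already discard, so the stated hypotheses are the natural ones.
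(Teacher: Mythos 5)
Your proposal is correct and follows essentially the same route as the paper's proof: assume a proper fuzzy sub/superset $S(a_j)$ of $S(a_i)$ exists in $\tau(S,A)$, observe that inclusion forces $S(a_i)\cap S(a_j)$ to equal the smaller of the two (hence non-empty), and derive a contradiction with the pairwise-disjointness hypothesis. Your write-up is in fact slightly more careful than the paper's, since you explicitly justify why $a_j\neq a_i$ before invoking that hypothesis.
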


\begin{proof}
 Let us assume that $S(a_i)\notin MAX(\tau(S,A))$, for all $a_i\in A$. Then there exists $S(a_j)\in \tau(S,A)$ such that $S(a_j)\supset S(a_i)$. This implies that $S(a_i)\cap S(a_j)=S(a_i)\neq \Tilde{\phi}$, which contradicts our assumption of the fuzzy soft set $(S,A)$. Thus, we get $S(a_i)\in MAX(\tau(S,A))$, $\forall a_i\in A$. \\

Similarly, let $S(a_i)\notin MIN(\tau(S,A))$, for all $a_i\in A$. Then, there exists $S(a_j)\in \tau(S,A)$ such that $S(a_j)\subset S(a_i)$. It implies that $S(a_i)\cap S(a_j)=S(a_j)\neq \Tilde{\phi}$, which again contradicts our assumption of the fuzzy soft set $(S,A)$. Thus we get $S(a_i)\in MIN(\tau(S,A))$. Hence, each fuzzy set in the family $\tau(S,A)$ is both maximal and minimal of the family $\tau(S,A)$, $\forall a_i\in A$.\\

\end{proof}

\subsection{Matrix representations of fuzzy soft sets}

~~~~Matrix representations of fuzzy soft sets can be used in mathematical analysis, computations and comparisons of fuzzy soft sets. They allow for direct comparison between fuzzy soft sets to determine the similarity between different fuzzy soft sets. Also matrix offers a structured way to organize fuzzy soft set data. Here, we discuss the way in which fuzzy soft sets can be represented using matrices.\\

  A matrix $M$ that has the same number of rows as the cardinality of a universal set $X$ and number of columns as the cardinality of the set of parameters $A$ can be used to represent a fuzzy soft set $(S,A)$ defined over a universal set $X$. For the sake of practical computation and other real-world uses, we consider that the cardinalities of $X$ and $A$ are finite numbers. Now, let $X=\{x_1, x_2,\dots, x_n\}$ and $A=\{a_1, a_2, \dots, a_m\}$. Then, $|X|=n$ and $|A|=m$, where $n, m \in \mathbb{N}$. Then, the matrix representation table of fuzzy soft set $(S,A)$ can be formulated as follows:

  \begin{table}[h]
\begin{center}
\begin{tabular}{c|c c c c c}
& $S(a_1)$ & $S(a_2)$ & $S(a_3)$ & $\dots$ & $S(a_m)$\\
\hline
$x_1$ & $d_{11}$ & $d_{12}$ & $d_{13}$ & $\dots$ & $d_{1m}$\\
$x_2$ & $d_{21}$ & $d_{22}$ & $d_{23}$ & $\dots$ & $d_{2m}$\\
\vdots & \vdots & \vdots & \vdots & $\ddots$ & \vdots\\
$x_n$  & $d_{n1}$ & $d_{n2}$ &  $d_{n3}$ & \dots & $d_{nm}$

\end{tabular}
\end{center} 
\caption{\label{tab1}Matrix representation table of the fuzzy soft set $(S,A)$}
\end{table}

    Where $d_{ij}$ be the membership value of $x_i$ in the fuzzy set $S(a_j)$, for each $i=1,2,\dots, n$ and $j=1,2,\dots, m$. Therefore, each entry of the matrix representation of a fuzzy soft set is from the interval [0,1]. Hence, from the above representation, we get the following matrix $M$ of order $|X|\times |A|=n\times m$ as  matrix representation of the fuzzy soft set $(S,A)$. 
    
    \begin{center}
    \[
    M=
    \begin{bmatrix}
           d_{11} & d_{12} & d_{13} & \dots & d_{1m}\\
         d_{21} & d_{22} & d_{23} & \dots & d_{2m}\\
        \vdots & \vdots & \vdots & \ddots & \vdots\\
      d_{n1} & d_{n2} & d_{n3} & \dots & d_{nm}\\
     \end{bmatrix}
     \]
     \end{center}

     In this way we can represent every fuzzy soft set in its matrix form. Now, we consider some examples that explain the matrix representations of fuzzy soft sets.

  \begin{example}
      Let $(S,A)$ be a fuzzy soft set defined over a universal set $X$, where $X=\{x, y, z\}$ and $A=\{a, b, c\}$. We define the fuzzy soft set $(S,A)$ as follows:

     $S(a)=\{\frac{x}{{\alpha}_1}, \frac{y}{{\alpha}_2}, \frac{z}{{\alpha}_3}\}$, $S(b)=\{\frac{x}{{\beta}_1}, \frac{y}{{\beta}_2}, \frac{z}{{\beta}_3}\}$ and $S(c)=\{\frac{x}{{\gamma}_1}, \frac{y}{{\gamma}_2}, \frac{z}{{\gamma}_3}\}$, where each ${\alpha}_i, {\beta}_i, {\gamma}_i \in [0,1]$

     Then, we obtain the matrix $M$ for the fuzzy soft set $(S,A)$ as 

      \begin{center}
    \[
    M=
    \begin{bmatrix}
           {\alpha}_1 & {\beta}_1 & {\gamma}_1\\
         {\alpha}_2 & {\beta}_2 & {\gamma}_2\\
     {\alpha}_3 & {\beta}_3 & {\gamma}_3\\
     \end{bmatrix}
     \]
     \end{center}
    
  \end{example}

  \begin{remark}
 While using a matrix representation of a fuzzy soft set, it's important to note the order of the elements of the universal set and set of parameters. Altering the orders of the elements of the universal set and set of parameters of a fuzzy soft set may give different matrix representations for the same fuzzy soft set which makes it challenging to define matrix representations and related operations. Therefore, any fuzzy soft set having a fixed ordering of elements of the universal set and set of parameters, can be transformed into its unique matrix form. If the matrix form of a fuzzy soft set defined over a universal set is given, then we can easily identify the fuzzy soft set, where the elements of the universal set and set of parameters have some fixed orderings.\\

     From now on, we take the elements of the universal set and set of parameters of a fuzzy soft set will have some fixed order.
     
   \end{remark}

\subsection{Connection between fuzzy soft sets through matrix representations}

To explore the relationship between fuzzy soft sets using their matrices representations, we must first define some terms that will be utilized later. We already know that fuzzy soft set the results must be defined by the fuzzy set families $\tau(S,A)$ and $\tau(G,B)$ of two fuzzy soft sets, $(S,A)$ and $(G,B)$, respectively. Here, we only need to look at the columns of the matrix representation of a fuzzy soft set $(S,A)$, because each column $c_i$ represents the membership value of the elements of the fuzzy sets $S(a_i)$.

\begin{definition}
    For a matrix $M_{n\times m}$, we define the column set $\mathcal{C}$ as the classical set of all columns of the matrix $M_{n\times m}$, i.e. $\mathcal{C}=\{c_i : i=1, 2, \dots, m\}$, where $c_i$ is the $i^{th}$ column of the matrix $M_{n\times m}$.
\end{definition}

\begin{definition}
    Consider the column set $\mathcal{C}=\{c_i : i=1, 2, \dots, m\}$ of the matrix $M_{n\times m}=(d_{ij})_{n\times m}$. Then, for some column $c_i ,c_j \in \mathcal{C}$; we say that

        (i) $c_i = c_j$, if $d_{ki} = d_{kj}$ for each $k=1, 2, \dots, n$

        (ii) $c_i \geq c_j$, if $d_{ki} \geq d_{kj}$ for each $k=1, 2, \dots, n$ 
        
        (iii)$c_i > c_j$, if $d_{ki} \geq d_{kj}$ for each $k=1, 2, \dots, n$ and there exists at least one $k=1, 2, \dots, n$ such that $d_{ki} > d_{kj}$
        
        (iv) $c_i \leq c_j$, if $d_{ki} \leq d_{kj}$ for each $k=1, 2, \dots, n$ 
        
        (v) $c_i < c_j$, if $d_{ki} \leq d_{kj}$ for each $k=1, 2, \dots, n$ and there exists at least one $k=1, 2, \dots, n$ such that $d_{ki} < d_{kj}$
    
    where $d_{ij}$ be the $ij^{th}$ entry of the column $c_j$.
\end{definition}

\begin{definition}
Let $c_i$ and $c_j$ be two columns of a matrix $M_{n\times m}=(d_{ij})_{n\times m}$. Then, the maximum of these columns is again a column, say $c_k$, i.e., $c_k=max\{c_i, c_j\}$ and it is defined by $d'_{lk}= max\{d_{li}, d_{lj}\}$, where $d'_{lk}$, $d_{li}$ and $d_{lj}$ be the $l^{th}$ entry of the columns $c_i$, $c_j$ and $c_k$ respectively, $l=1,2,\dots, n$. Here the column $c_k$ may or may not be a column of the matrix $M_{n\times m}=(d_{ij})_{n\times m}$.
\end{definition}

\begin{definition}
Let $c_i$ and $c_j$ be two columns of a matrix $M_{n\times m}=(d_{ij})_{n\times m}$. Then, the minimum of these columns is again a column, say $c_k$, i.e., $c_k=min\{c_i, c_j\}$ and it is defined by $d'_{lk}= min\{d_{li}, d_{lj}\}$, where $d'_{lk}$, $d_{li}$ and $d_{lj}$ be the $l^{th}$ entry of the columns $c_i$, $c_j$ and $c_k$ respectively, $l=1,2,\dots, n$. Here the column $c_k$ may or may not be a column of the matrix $M_{n\times m}=(d_{ij})_{n\times m}$.
\end{definition}

Now, we are ready to discuss the relationships between fuzzy soft sets by using their matrix representations. Consider two fuzzy soft sets $(S,A)$ and $(G,B)$ defined over a universal set $X$ whose elements have some fixed-order. Let $M_{n\times m}$ and $M_{n\times p}$ are the matrices of the fuzzy soft sets $(S,A)$ and $(G,B)$ respectively.\\

\begin{theorem}

Let $M_{n\times m}$ be the matrix representation of the fuzzy soft set $(S,A)$ over the universal set $X$. Then the column set $\mathcal{C}$ of the matrix $M_{n\times m}$ corresponds the family of fuzzy sets $\tau (S,A)$ and conversely.

\end{theorem}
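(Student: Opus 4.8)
The plan is to exhibit an explicit bijection between the column set $\mathcal{C}$ of $M_{n\times m}$ and the family $\tau(S,A)$, and to check that this bijection is canonical once the orderings of $X$ and $A$ have been fixed. Recall from the construction of the matrix representation that, writing $X=\{x_1,\dots,x_n\}$ and $A=\{a_1,\dots,a_m\}$ in their fixed orders, the entry $d_{ij}$ of $M_{n\times m}$ equals $\mu_{S(a_j)}(x_i)$; hence the $j^{th}$ column $c_j=(d_{1j},\dots,d_{nj})^{T}$ is precisely the list of membership grades of the fuzzy set $S(a_j)$ at $x_1,\dots,x_n$. I would therefore define a map $\Phi:\mathcal{C}\to\tau(S,A)$ by $\Phi(c_j)=S(a_j)$, and a map $\Psi:\tau(S,A)\to\mathcal{C}$ sending a fuzzy set $F\in\tau(S,A)$ to the column whose $i^{th}$ entry is $\mu_F(x_i)$, and then show these are mutually inverse.

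The substantive step is to verify that $\Phi$ is well-defined and injective, which both reduce to one observation. Since $\mathcal{C}$ and $\tau(S,A)$ are genuine (classical) sets specified by set-builder notation, equality in $\mathcal{C}$ means $c_i=c_j$, i.e. $d_{ki}=d_{kj}$ for all $k=1,\dots,n$, while equality in $\tau(S,A)$ means $S(a_i)=S(a_j)$, i.e., by Zadeh's definition of equality of fuzzy sets together with $X=\{x_1,\dots,x_n\}$, $\mu_{S(a_i)}(x_k)=\mu_{S(a_j)}(x_k)$ for all $k$. Because $d_{kj}=\mu_{S(a_j)}(x_k)$, these two conditions are literally identical, so $c_i=c_j$ if and only if $S(a_i)=S(a_j)$; reading this implication one way gives well-definedness of $\Phi$ (duplicated columns collapse exactly as duplicated fuzzy sets do), and the other way gives injectivity. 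Surjectivity is immediate, since every element of $\tau(S,A)$ has the form $S(a_j)$ for some $j$ and $S(a_j)=\Phi(c_j)$. A symmetric check shows $\Psi$ is well-defined and that $\Psi\circ\Phi=\mathrm{id}_{\mathcal{C}}$ and $\Phi\circ\Psi=\mathrm{id}_{\tau(S,A)}$, establishing the correspondence.

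For the converse direction I would note that, given only the matrix $M_{n\times m}$ under the agreed fixed ordering of $X$, each column $c_j$ determines a unique fuzzy set on $X$ via $\mu(x_i)=d_{ij}$, and the collection of the fuzzy sets so obtained is exactly $\tau(S,A)$; by the convention fixed in the preceding remark this reconstruction is unambiguous, so $M_{n\times m}$ and $\tau(S,A)$ determine each other.

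The whole argument is essentially bookkeeping, so the only place requiring genuine care — and the place where the hypotheses are actually used — is the bridge between ``two columns are equal'' and ``two fuzzy sets are equal.'' This relies on finiteness of $X$ (so that a column is a finite vector indexed by all of $X$) and on the fixed-ordering convention (so that the $i^{th}$ slot of every column unambiguously names $x_i$); without these a column would not carry the full information of a fuzzy set and the correspondence could fail to be injective. Beyond this point I anticipate no obstacles.
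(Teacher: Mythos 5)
Your proposal is correct, and it rests on the same underlying observation as the paper's proof: the $j^{th}$ column of $M_{n\times m}$ records exactly the membership grades $\mu_{S(a_j)}(x_i)$, so columns and fuzzy sets carry the same information once the orderings of $X$ and $A$ are fixed. Where you differ is in packaging. The paper assigns each column $c_i$ to the fuzzy set $S(a_i)$ and then argues by a three-way case split on the cardinality of $\mathcal{C}$ (namely $|\mathcal{C}|=1$, $|\mathcal{C}|=p<m$, and $|\mathcal{C}|=m$), concluding in each case that $|\tau(S,A)|=|\mathcal{C}|$ and declaring the converse obvious. You instead exhibit explicit mutually inverse maps $\Phi$ and $\Psi$ and reduce both well-definedness and injectivity to the single equivalence $c_i=c_j$ if and only if $S(a_i)=S(a_j)$, which follows from Zadeh's definition of equality of fuzzy sets. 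Your version is tighter: the equivalence you isolate subsumes all three of the paper's cases at once (they are really the same argument repeated for different cardinalities), it makes the ``corresponds'' in the statement precise as a bijection rather than a mere equality of cardinalities, and it makes explicit where finiteness of $X$ and the fixed-ordering convention are actually used. The paper's case analysis buys nothing beyond what your single observation already gives.
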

\begin{proof}

 Consider the matrix $M_{n\times m}$ to be the matrix representation of the fuzzy soft set $(S,A)$ defined over a universal set $X$ and let $\mathcal{C}$ be its column set. We already know that every column $c_i\in \mathcal{C}$ represents the membership value of the elements of the fuzzy sets $S(a_i)$, for some $i=1, 2,\dots, m$. Thus, we can say that $c_i$ corresponds the fuzzy set $S(a_i)$. We consider the following three cases:\\

\textbf{Case 1 :}
 If $|\mathcal{C}|=1$, i.e., all columns of the matrix $M_{n\times m}$ are equal then they corresponds to a single fuzzy set in $\tau(S,A)$. So, we get $|\tau(S,A)|=1$. Thus the column set $\mathcal{C}$ corresponds to the family $\tau(S,A)$.\\

 \textbf{Case 2 :}
If $|\mathcal{C}|=p$, for some integer $p<m$, then these $p$ columns correspond to $p$ different fuzzy sets in $\tau(S,A)$ and we get $|\tau(S,A)|=p$. So, the column set $\mathcal{C}$ corresponds to the family $\tau(S,A)$. \\

 \textbf{Case 3 :}
 If $|\mathcal{C}|=m$, i.e., all columns of the matrix $M_{n\times m}$ are distinct, then they correspond to $m$ different fuzzy sets in $\tau(S,A)$. Thus, we get $|\tau(S,A)|=m$. Hence the column set $\mathcal{C}$ corresponds to the family $\tau(S,A)$.\\

 The converse part of the above result is obvious.
    
\end{proof}

\begin{remark}
    Let $\mathcal{C}$ be the column set of the matrix representation $M_{n\times m}$ of a fuzzy soft set $(S,A)$ defined over a universal set $X$. If for any column $c_i \in \mathcal{C}$, every entry is equal to zero, then it corresponds to the fuzzy set $S(a)=\Tilde{\phi}$ for some $a\in A$ and we denote this column by $c_0$. Similarly, a column $c_i \in \mathcal{C}$ in which every entry is equal to one, corresponds to a fuzzy set $S(b)=\Tilde{X}$ for some $b\in A$ and it is denoted by $c_u$.
\end{remark}

 \begin{theorem}
     Two fuzzy soft sets $(S,A)$ and $(G,B)$ defined over a universal set $X$ are equal if and only if $M_{n\times m} = M_{n\times p}$ and $A=B$, where $M_{n\times m}$ and $M_{n\times p}$ are the matrix representations of the fuzzy soft sets $(S,A)$ and $(G,B)$ respectively. 
 \end{theorem}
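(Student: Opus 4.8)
The plan is to simply unwind the two definitions involved---the matrix representation of a fuzzy soft set and the notion of equality of fuzzy soft sets---and to bridge them using Zadeh's criterion that two fuzzy sets coincide precisely when their membership functions agree on all of $X$, together with the finiteness of $X$. Throughout I would invoke the fixed-ordering convention adopted earlier (the elements of $X$ and of each parameter set are listed in a fixed order), since the matrix of a fuzzy soft set is only well-defined relative to such orderings. Write $X=\{x_1,\dots,x_n\}$, $A=\{a_1,\dots,a_m\}$, $B=\{b_1,\dots,b_p\}$, and let $M_{n\times m}=(d_{ij})$ and $M_{n\times p}=(d'_{ij})$ be the matrices of $(S,A)$ and $(G,B)$, so that by construction $d_{ij}=\mu_{S(a_j)}(x_i)$ and $d'_{ij}=\mu_{G(b_j)}(x_i)$.

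For the forward direction, suppose $(S,A)=(G,B)$. By the definition of equality of fuzzy soft sets this means $A=B$ and $S(a)=G(a)$ for all $a\in A$. Since $A=B$, they carry the same fixed ordering, so $m=p$ and $a_j=b_j$ for each $j$; and since both fuzzy soft sets are over the same $X$ with its fixed ordering, the rows of the two matrices are indexed alike. Hence $d_{ij}=\mu_{S(a_j)}(x_i)=\mu_{G(a_j)}(x_i)=d'_{ij}$ for all $i,j$, so $M_{n\times m}=M_{n\times p}$, and $A=B$ holds by hypothesis.

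For the converse, suppose $M_{n\times m}=M_{n\times p}$ and $A=B$. Then $m=|A|=|B|=p$, consistent with the two matrices having equal size, and we again use the common ordering on $A=B$. Entrywise equality of the matrices gives $\mu_{S(a_j)}(x_i)=\mu_{G(a_j)}(x_i)$ for all $1\le i\le n$ and $1\le j\le m$; since $X$ is finite this says that the membership functions of $S(a_j)$ and $G(a_j)$ agree on every point of $X$, so $S(a_j)=G(a_j)$ for every $j$, i.e.\ $S(a)=G(a)$ for all $a\in A$. With $A=B$, the definition of equality of fuzzy soft sets yields $(S,A)=(G,B)$.

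The arithmetic of membership values is immediate; the one place that needs care is the bookkeeping around orderings. The statement ``$M_{n\times m}=M_{n\times p}$'' already presupposes that the two matrices are comparable, which forces $|A|=|B|$, and the hypothesis $A=B$ must be read as fixing a single common ordering of the columns, so that column $j$ of one matrix corresponds to the same parameter as column $j$ of the other. Making this explicit---rather than the trivial computation---is the main subtlety; and note that without the hypothesis $A=B$ the statement fails, since two fuzzy soft sets with different parameter sets can share the same matrix, so that hypothesis is genuinely needed.
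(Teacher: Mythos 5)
Your proposal is correct and follows essentially the same route as the paper: both directions reduce to the entrywise (equivalently, columnwise) identification $d_{ij}=\mu_{S(a_j)}(x_i)$ under the fixed-ordering convention, with the hypothesis $A=B$ supplying the common column indexing. The paper dismisses the forward direction as obvious and argues the converse column-by-column, while you spell out both directions and the ordering bookkeeping explicitly, but the underlying argument is identical.
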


 \begin{proof}
     
The proof of the first part is obvious. So, here we only prove the converse part of the theorem. Let $M_{n\times m} = M_{n\times p}$ and $A=B$. Since, $A=B$, then we get $|A|=|B|$. Thus the orders of the matrices $M_{n\times m}$ and $M_{n\times p}$ are equal and equal to $|X|\times |A|$. To prove $(S,A)=(G,B)$, we only have to show $S=G$. \\

We know that each column $c_i\in \mathcal{C}$ of the matrix representation of a fuzzy soft set $(S,A)$ corresponds to a fuzzy set $S(a_i)\in \tau(S,A)$. Let $c_i$ and $c'_i$ be the $i^{th}$ columns of the matrix $M_{n\times m}$ and $M_{n\times p}$ respectively. Since $M_{n\times m} = M_{n\times p}$, so $c_i = c'_i$, for all $i\in \mathbb{N}$ and $1\leq i \leq |A|=|B|$. Thus, the corresponding fuzzy sets $S(a_i)\in \tau(S,A)$ and $G(b_i)\in \tau(G,B)$ are equal, i.e., $S(a_i)=G(b_i)$ for all $i\in \mathbb{N}$ and $1\leq i \leq |A|=|B|$, $a_i \in A$ and $b_i \in B$. This implies that $S=G$. Hence, the fuzzy soft sets $(S,A)$ and $(G,B)$ are equal.\\

 \end{proof}

Let us consider an example to illustrate the above theorem.
 \begin{example}
     Let us consider $X=\{x, y, z\}$, $A=\{a, b\}$, $B=\{p,q\}$ and let $M_{3\times 2}$ and $N_{3\times 2}$ are the matrix representations of the fuzzy soft sets $(S,A)$ and $(G,B)$ respectively defined over a universal set $X$. 
     Again, let $M_{3\times 2} = N_{3\times 2}$ where
\begin{center}
    \[
    M_{3\times 2}=N_{3\times 2}=
    \begin{bmatrix}
         0.2 & 0.7 \\
          1 & 0.5 \\
            0.5 & 0 \\
     \end{bmatrix}
     \]
     \end{center}

     From the given matrix we get $S(a)=G(p)=\{\frac{x}{0.2}, \frac{y}{1}, \frac{z}{0.5}\}$ and $S(b)=G(q)=\{\frac{x}{0.7}, \frac{y}{0.5}\}$, this implies that $S=G$. In addition, if we let $A=B$ then we can say $(S,A)=(G,B)$.
     
 \end{example}

\begin{theorem}
 Let $M_{n\times m}$ and $M_{n\times p}$ be the matrix representations of the fuzzy soft sets $(S,A)$ and $(G,B)$ respectively defined over a universal set $X$. These two fuzzy soft sets $(S,A)$ and $(G,B)$ are said to be equivalent if and only if ${\mathcal{C}}_s={\mathcal{C}}_g$, where ${\mathcal{C}}_s$ and ${\mathcal{C}}_g$ are the column sets of the matrices $M_{n\times m}$ and $M_{n\times p}$ respectively.
\end{theorem}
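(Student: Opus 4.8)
The plan is to reduce the statement to Definition 4.3 (equivalence means $\tau(S,A)=\tau(G,B)$) by transporting it along the correspondence between columns and fuzzy sets supplied by the earlier theorem on column sets. Both fuzzy soft sets are defined over the same universal set $X$, and by the Remark we may fix once and for all an ordering $X=\{x_1,\dots,x_n\}$, so that $M_{n\times m}$ and $M_{n\times p}$ both have exactly $n$ rows. Under this fixed ordering, every $n$-column $c=(d_1,\dots,d_n)^{T}\in[0,1]^n$ determines, and is determined by, a unique fuzzy set $F_c$ over $X$ with $\mu_{F_c}(x_k)=d_k$ for $k=1,\dots,n$. Write $\Phi$ for the assignment $c\mapsto F_c$. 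By the definition of equality of columns and Zadeh's definition of equality of fuzzy sets, $c_i=c_j$ if and only if $F_{c_i}=F_{c_j}$, so $\Phi$ is a bijection from the set of all $n$-columns over $[0,1]$ onto the set $P(X)$ of all fuzzy sets over $X$.

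First I would invoke the preceding theorem (column set corresponds to the family of fuzzy sets) to record that $\Phi({\mathcal{C}}_s)=\tau(S,A)$ and $\Phi({\mathcal{C}}_g)=\tau(G,B)$. Indeed, ${\mathcal{C}}_s=\{c_1,\dots,c_m\}$ where $c_i$ is the column of membership values of $S(a_i)$, hence $F_{c_i}=S(a_i)$ and so $\Phi({\mathcal{C}}_s)=\{S(a_i):a_i\in A\}=\tau(S,A)$; the argument for ${\mathcal{C}}_g$ and $(G,B)$ is identical.

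Then the two directions are a one-line transport along $\Phi$. For the forward direction, assume $(S,A)\cong(G,B)$, i.e. $\tau(S,A)=\tau(G,B)$; applying the inverse bijection $\Phi^{-1}$ to both sides gives ${\mathcal{C}}_s=\Phi^{-1}(\tau(S,A))=\Phi^{-1}(\tau(G,B))={\mathcal{C}}_g$. Conversely, assume ${\mathcal{C}}_s={\mathcal{C}}_g$; applying $\Phi$ gives $\tau(S,A)=\Phi({\mathcal{C}}_s)=\Phi({\mathcal{C}}_g)=\tau(G,B)$, so $(S,A)\cong(G,B)$ by Definition 4.3.

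The only delicate point, and the step I would spell out most carefully, is the claim that $\Phi$ is a genuine bijection compatible with set equality: this hinges on the fixed ordering of the elements of $X$ guaranteed by the Remark. Without that fixed ordering a single fuzzy set could be encoded by several distinct columns, and the two column sets could fail to coincide even when the families of fuzzy sets agree; once the ordering is fixed, everything else is routine.
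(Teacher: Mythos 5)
Your proposal is correct and follows essentially the same route as the paper: both reduce the statement to Definition of equivalence ($\tau(S,A)=\tau(G,B)$) via the preceding theorem's correspondence between the column set and the family $\tau(S,A)$. The only difference is that you make the correspondence an explicit bijection $\Phi$ compatible with equality of fuzzy sets and transport set equality along it, which is a welcome tightening of the paper's terser ``corresponds to \dots\ hence ${\mathcal{C}}_s={\mathcal{C}}_g$'' step.
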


\begin{proof}
    
 If the fuzzy soft sets $(S,A)$ and $(G,B)$ are equivalent, then we have $\tau(S,A)= \tau(G,B)$. Again from theorem 3.9, we know that the column set of the matrix representation of a fuzzy soft set $(S,A)$ corresponds to the family of set $\tau(S,A)$. Hence ${\mathcal{C}}_s={\mathcal{C}}_g$. Similarly, we can prove the other part of the theorem.\\
\end{proof}
We consider the following example to demonstrate the above theorem.

  \begin{example}
      Let $M_{3\times 2}$ and $N_{3\times3}$ be the matrix representations of the fuzzy soft sets $(S,A)$ and $(G,B)$ defined over a universal set $X$. We consider $X=\{x, y, z\}$, $A=\{a, b\}$ and $B=\{p, q, r\}$. Given that 
      \[
    M_{3\times 2}=
    \begin{bmatrix}
         0.2 & 0.7 \\
          1 & 0.5 \\
            0.5 & 0 \\
     \end{bmatrix}
     \] and 
     \[
    N_{3\times 3}=
    \begin{bmatrix}
        0.7 & 0.2 & 0.7 \\
          0.5 & 1 & 0.5 \\
           0 & 0.5 & 0 \\
     \end{bmatrix}
     \]

     From the given matrix $M_{3\times 2}$, we get ${\mathcal{C}}_s=\{c_a, c_b\}$, where $c_a$ and $c_b$ are the columns corresponding to the soft sets $S(a)$ and $S(b)$. Since $c_p = c_r$, so ${\mathcal{C}}_g=\{c_p, c_q\}$ is the column set of the matrix $N_{3\times3}$. Now, we see that $c_a = c_q$ and $c_b = c_p$. Hence, ${\mathcal{C}}_s={\mathcal{C}}_g$. Therefore, the fuzzy soft sets $(S,A)$ and $(G,B)$ are equivalent.
  \end{example}

\begin{theorem}

    Consider the matrix representation $M_{n\times m}$ of the fuzzy soft set $(S,A)$ defined over a universal set $X$. Then, \\
    
    (i) for any column $c_i(\neq c_0) \in \mathcal{C}$ of the matrix $M_{n\times m}$, if there does not exist $c_j(\neq c_i )\in \mathcal{C}$ such that $c_j < c_i$ then the fuzzy set $S(a)$, which corresponds the column $c_i$, belongs to $MIN(\tau (S,A))$,\\

     (ii) for any column $c_i(\neq c_u) \in \mathcal{C}$ of the matrix $M_{n\times m}$, if there does not exist $c_j(\neq c_i) \in \mathcal{C}$ such that $c_j > c_i$ then the fuzzy set $S(a)$, which corresponds the column $c_i$, belongs to $MAX(\tau (S,A))$.\\
\end{theorem}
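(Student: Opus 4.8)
The plan is to translate the statement entirely into the language of the family $\tau(S,A)$ and then observe that the two hypotheses are precisely the defining conditions of $MIN(\tau(S,A))$ and $MAX(\tau(S,A))$ rephrased through the column ordering. First I would recall the correspondence established earlier between the column set $\mathcal{C}$ of $M_{n\times m}$ and the family $\tau(S,A)$: a column $c_i$ corresponds to the fuzzy set $S(a)\in\tau(S,A)$ whose membership values are exactly the entries of $c_i$, the all-zero column $c_0$ corresponds to $\tilde\phi$, and the all-one column $c_u$ corresponds to $\tilde X$. The second thing to record is the translation dictionary: for columns $c_j,c_i$ corresponding to fuzzy sets $S(a'),S(a)$, the relation $c_j<c_i$ (every entry of $c_j$ is $\le$ the corresponding entry of $c_i$, with at least one strict inequality) holds if and only if $\mu_{S(a')}(x)\le\mu_{S(a)}(x)$ for every $x\in X$ with strict inequality for some $x$, i.e. $S(a')$ is a proper fuzzy subset of $S(a)$; likewise $c_j\ne c_i$ corresponds to $S(a')\ne S(a)$, and dually $c_j>c_i$ corresponds to $S(a)$ being a proper fuzzy subset of $S(a')$.

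For part (i) I would argue by contradiction. Suppose $c_i\ne c_0$, so that $S(a)\ne\tilde\phi$, and suppose $S(a)\notin MIN(\tau(S,A))$. Since $S(a)\ne\tilde\phi$, the definition of $MIN$ forces the existence of some $D\in\tau(S,A)$ with $D\ne\tilde\phi$ and $D\subset S(a)$ (proper inclusion). By the column/family correspondence, $D=S(a')$ for some parameter and it is represented by a column $c_j\in\mathcal{C}$; properness of the inclusion gives $D\ne S(a)$, hence $c_j\ne c_i$, and the dictionary gives $c_j<c_i$. This contradicts the hypothesis that no column $c_j\ne c_i$ satisfies $c_j<c_i$, so $S(a)\in MIN(\tau(S,A))$. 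Part (ii) is the order-theoretic dual: if $c_i\ne c_u$, so $S(a)\ne\tilde X$, and $S(a)\notin MAX(\tau(S,A))$, then there is $D=S(a')\in\tau(S,A)$ with $D\ne\tilde X$ and $S(a)\subset D$; its column $c_j$ satisfies $c_j\ne c_i$ and $c_j>c_i$, again contradicting the hypothesis.

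I do not expect a serious obstacle; the only care needed is bookkeeping. One must check that the column comparison really matches the fuzzy proper-subset relation, including the ``strict at one coordinate'' clause, so that $c_j<c_i$ and the pair ($D\subset S(a)$ proper, $D\ne S(a)$) are genuinely equivalent. One should also note the slight mismatch between the hypothesis, which forbids \emph{any} column $c_j\ne c_i$ with $c_j<c_i$ (in particular it would be violated if $c_0\in\mathcal{C}$), and the definition of $MIN$, which only forbids a \emph{non-empty} $D$ with $D\subset S(a)$: this is harmless for the stated one-directional implication, because any disqualifying $D$ in the definition of $MIN$ is non-empty and therefore still produces a column $c_j\ne c_i$ with $c_j<c_i$; the analogous remark applies to $c_u$, $\tilde X$ and $MAX$ in part (ii).
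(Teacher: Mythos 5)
Your proposal is correct and follows essentially the same route as the paper: translate the column ordering into proper fuzzy-set inclusion via the column/$\tau(S,A)$ correspondence and read off the definitions of $MIN$ and $MAX$. In fact your version is the more careful one — you orient the inclusion correctly ($c_j<c_i$ corresponds to $S(a_j)\subset S(a_i)$, whereas the paper's text writes the reversed inclusion) and you explicitly handle the small mismatch between ``no column $c_j\neq c_i$'' and the ``non-empty $D$'' clause in the definition of $MIN$, which the paper glosses over.
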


\begin{proof}

(i) Let the matrix $M_{n\times m}$ be the matrix representation of the fuzzy soft sets $(S,A)$ defined over a universal set $X$. We already know that any column $c_i\in \mathcal{C}$ of the matrix $M_{n\times m}$ represents the fuzzy set $S(a_i)\in \tau(S,A)$, for $a_i\in A$, $i\in \mathbb{N}$ and $1\leq i \leq m$. Also, $c_0\in \mathcal{C}$ represent the empty fuzzy set in $\tau(S,A)$.

Now, let $c_i(\neq c_0) \in \mathcal{C}$ be any column of the matrix $M_{n\times m}$ and let there does not exist $c_j(\neq c_i )\in \mathcal{C}$ such that $c_j < c_i$. If the columns $c_i, c_j\in \mathcal{C}$ represent the fuzzy sets $S(a_i), S(a_j)\in \tau(S,A)$ respectively then the inequality $c_j < c_i$ implies that $S(a_i) \subset S(a_j)$. Thus we get, for $S(a_i) (\neq \tilde{\phi}) \in \tau(S,A)$, there does not exist $S(a_j)(\neq S(a_i)) \in \tau(S,A)$ such that  $S(a_i)\subset S(a_j)$. This implies that $S(a_i) \in MIN(\tau (S,A))$.

By the similar process we can prove the statement (ii).
\end{proof}

\begin{theorem}

     Let $M_{n\times m}$ and $M_{n\times p}$ be the matrix representations of the fuzzy soft sets $(S,A)$ and $(G,B)$ respectively defined over a universal set $X$. Consider the sets of parameters $A=\{a_1, a_2, \dots, a_m\}$ and $B=\{b_1, b_2, \dots, b_p\}$ and ${\mathcal{C}}_s=\{c_{a_i}: i=1, 2, \dots, m\}$ and ${\mathcal{C}}_g =\{c_{b_j}: j=1, 2, \dots, p\}$  be the column sets of the matrices $M_{n\times m}$ and $M_{n\times p}$ respectively, where $c_{a_i}$ and $c_{b_j}$ are the $i^{th}$ and $j^{th}$ column in the matrices $M_{n\times m}$ and $M_{n\times p}$ corresponding to the fuzzy sets $S(a_i)$ and $G(b_j)$ respectively. Then,\\

       (i) the soft set $(S,A)$ internally approximates the soft set $(G,B)$, written as $(S,A) \subseteq (G,B)$, if for any $c_{b_j} (\neq c_0) \in {\mathcal{C}}_g$, there exists $c_{a_i} (\neq c_0) \in {\mathcal{C}}_s$ such that $c_{a_i}\leq c_{b_j}$,\\

       (ii) the soft set $(S,A)$ externally approximates the soft set $(G,B)$, written as $(S,A) \supseteq (G,B)$, if for any $c_{b_j} (\neq c_u) \in {\mathcal{C}}_g$, there exists $c_{a_i} (\neq c_u) \in {\mathcal{C}}_s$ such that $c_{a_i}\geq c_{b_j}$.\\
       
\end{theorem}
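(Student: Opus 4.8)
The plan is to translate, clause by clause, the definitions of internal and external approximation into the language of columns, and then simply read off the two statements; no idea beyond this dictionary is needed. Two earlier facts do the heavy lifting: the theorem asserting that the column set $\mathcal{C}$ of the matrix representation of a fuzzy soft set corresponds to its family $\tau$ — so that the column $c_{a_i}$ carries exactly the membership data $\mu_{S(a_i)}(x_1),\dots,\mu_{S(a_i)}(x_n)$ of the fuzzy set $S(a_i)$, and likewise $c_{b_j}$ for $G(b_j)$ — and the Remark identifying the all-zero column $c_0$ with the empty fuzzy set $\tilde{\phi}$ and the all-one column $c_u$ with the absolute fuzzy set $\tilde{X}$. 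Combining these, a column $c_{b_j}$ differs from $c_0$ exactly when $G(b_j)\neq\tilde{\phi}$ and differs from $c_u$ exactly when $G(b_j)\neq\tilde{X}$, and the same holds for the columns of $\mathcal{C}_s$.

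I would first record the comparison dictionary as a short observation: for a column $c_{a_i}$ (of the $n\times m$ matrix) and a column $c_{b_j}$ (of the $n\times p$ matrix), both lying in $[0,1]^n$, the relation $c_{a_i}\leq c_{b_j}$ says that the $k$-th entry of $c_{a_i}$ is at most the $k$-th entry of $c_{b_j}$ for every $k=1,\dots,n$, i.e. $\mu_{S(a_i)}(x_k)\leq\mu_{G(b_j)}(x_k)$ for every $x_k\in X$, which by the definition of fuzzy inclusion is precisely $S(a_i)\subseteq G(b_j)$ as fuzzy sets; dually $c_{a_i}\geq c_{b_j}$ is exactly $S(a_i)\supseteq G(b_j)$.

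For part (i), treating it as an equivalence (the converse comes for free by symmetry): assume the column condition. Given any $b\in B$ with $G(b)\neq\tilde{\phi}$, its column $c_b\in\mathcal{C}_g$ satisfies $c_b\neq c_0$, so the hypothesis yields $c_{a_i}\neq c_0$ in $\mathcal{C}_s$ with $c_{a_i}\leq c_b$; by the dictionary and the Remark this reads $\tilde{\phi}\neq S(a_i)\subseteq G(b)$, which is exactly what the definition of internal approximation demands, so $(S,A)\subseteq(G,B)$. Conversely, assume $(S,A)\subseteq(G,B)$ and take any $c_{b_j}\neq c_0$ in $\mathcal{C}_g$; then $G(b_j)\neq\tilde{\phi}$, the definition supplies $a\in A$ with $\tilde{\phi}\neq S(a)\subseteq G(b_j)$, and the column $c_a\in\mathcal{C}_s$ of $S(a)$ satisfies $c_a\neq c_0$ and $c_a\leq c_{b_j}$. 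Part (ii) is the verbatim dual, with $\supseteq$, $c_u$, $\tilde{X}$, $\geq$ replacing $\subseteq$, $c_0$, $\tilde{\phi}$, $\leq$, and the definition of external approximation in place of that of internal approximation.

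The only delicate point — and the nearest thing to an obstacle — is matching the existential quantifier over parameters $a\in A$ appearing in the definitions with the existential quantifier over columns $c_{a_i}\in\mathcal{C}_s$ appearing in the statement. Since several parameters may be sent to the same fuzzy set, $\mathcal{C}_s$ is a quotient-style image of $A$ rather than a bijective copy; nevertheless the two quantifiers are interchangeable, because every $a\in A$ determines a column of $\mathcal{C}_s$ and every column of $\mathcal{C}_s$ is the column of at least one $a\in A$. I would flag this explicitly so that the matrix condition is not misread as a claim about a distinguished representative. Everything else is routine substitution.
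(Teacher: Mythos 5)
Your proposal is correct and follows essentially the same route as the paper's proof: translating the column inequalities into fuzzy-set inclusions via the column--$\tau$ correspondence and the identification of $c_0$ with $\tilde{\phi}$ and $c_u$ with $\tilde{X}$. You are in fact more careful than the paper, which only argues the "column condition implies approximation" direction and does not flag the quantifier-matching point about several parameters sharing a column, but these are refinements of the same argument rather than a different approach.
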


\begin{proof}

(i) If the columns $c_{a_i}$ and $ c_{b_j}$ represent the fuzzy sets $S(a_i)$ and $G(b_j)$ in $\tau(S,A)$ and $\tau(G,B)$ respectively, then from the above statement we simply get, for $G(b_j) (\neq \tilde{\phi}) \in \tau(G,B)$, there exists $S(a_i) (\neq \tilde{\phi}) \in \tau(S,A)$ such that $S(a_i)\subseteq G(b_j)$. This implies that the soft set $(S,A)$ internally approximates the soft set $(G,B)$.\\

(ii) From the statement (ii) we get that, for $G(b_j) (\neq \Tilde{X}) \in \tau(G,B)$, there exists $S(a_i) (\neq \tilde{X}) \in \tau(S,A)$ such that $S(a_i)\supseteq G(b_j)$, as we know that the column $c_u$ represent the universal set $X$. Hence, we get $(S,A) \supseteq (G,B)$.
\end{proof}

  \subsection{Operations between fuzzy soft sets using matrix representations}

  In this subsection, we discuss unary and binary operations on fuzzy soft sets with the help of their matrix representation.

  \subsubsection{Fuzzy soft complement}

  Let $M_{n\times m}$, where $n,m \in \mathbb{N}$, be the matrix representation of the fuzzy soft set $(S,A)$ defined over a universal set $X$ and $C(S,A)$ be the complement of the fuzzy soft set $(S,A)$. Let $\mathcal{C}$ be the column set of the matrix $M_{n\times m}$. Then, the matrix $M'_{n\times m}$ with the column set $\mathcal{C'}$ is said to be the matrix representation of the fuzzy soft set $C(S,A)$ if $c'_i=1-{c_i}$, for all $i=1, 2, \dots, m$, $c_i \in \mathcal{C}$ and $c'_i \in \mathcal{C'}$.

  \subsubsection{Fuzzy soft intersection}

  Let $(S,A)$ and $(G,B)$ be two fuzzy soft sets defined over a universal set $X$. Also $M_{n\times m}$ and $N_{n\times p}$ be the matrix representations of these fuzzy soft sets with the column sets ${\mathcal{C}}_S$ and ${\mathcal{C}}_G$ respectively, where $n, m, p \in \mathbb{N}$. Let $(I, A\times B)=(S,A) \cap (G,B)$. Then, the matrix $P_{n\times mp}$ with column set ${\mathcal{C}}_I$, be the matrix representation of the fuzzy soft set $(I,A\times B)$ and it is defined as given below:
  \begin{center}
      $c_{ij}=min\{c_i, c_j\}$, for all $i=1, 2, \dots, m$ and $j=1, 2, \dots, p$
  \end{center}
where $c_i \in {\mathcal{C}}_S$, $c_j \in {\mathcal{C}}_G$ and $c_{ij} \in {\mathcal{C}}_I$.

   \subsubsection{Fuzzy soft union}

  Let $(S,A)$ and $(G,B)$ be two fuzzy soft sets defined over a universal set $X$. Assume that $M_{n\times m}$ and $N_{n\times p}$ be the matrix representations of these fuzzy soft sets with the column sets ${\mathcal{C}}_S$ and ${\mathcal{C}}_G$ respectively, where $n, m, p \in \mathbb{N}$. Let $(U, A\times B)=(S,A) \cup (G,B)$. Then, the matrix $P_{n\times mp}$ with column set ${\mathcal{C}}_I$, be the matrix representation of the fuzzy soft set $(I,A\times B)$ and it is defined as given below: 
  \begin{center}
      $c_{ij}=max\{c_i, c_j\}$, for all $i=1, 2, \dots, m$ and $j=1, 2, \dots, p$
  \end{center}
where $c_i \in {\mathcal{C}}_S$, $c_j \in {\mathcal{C}}_G$ and $c_{ij} \in {\mathcal{C}}_I$.\\

Now, we have several examples of matrix representations using the above mentioned operators.

  \begin{example}
      Let us consider the fuzzy soft set $(S,A)$ defined over a universal set $X$. Let $X=\{a, b, c\}$ and $A=\{x, y, z\}$. We define the fuzzy soft set as follows:
      
      $(S,A)=\{(x, \{\frac{a}{0.4}\}), (y, \{\frac{a}{0.5}, \frac{b}{0.2}, \frac{c}{0.3}\}), (z,\{\frac{a}{0.4}, \frac{b}{0.2}\})\}$\\

      Then, the matrix representation of the fuzzy soft set $(S,A)$ is given below:
\begin{center}
    \[
    M_{3\times3}=
    \begin{bmatrix}
        0.4 & 0.5 & 0.4 \\
          0 & 0.2 & 0.2 \\
           0 & 0.3 & 0 \\
     \end{bmatrix}
     \]
\end{center}
Again, let $N_{3\times 3}$ be the matrix representation of the fuzzy soft set $C(S,A)$. Now, we can easily form the matrix $N_{3\times 3}$. The matrix is shown below: 
\begin{center}
    \[
    N_{3\times 3}=
    \begin{bmatrix}
        0.6 & 0.5 & 0.6 \\
          1 & 0.8 & 0.8 \\
           1 & 0.7 & 1 \\
     \end{bmatrix}
     \]
\end{center}
  \end{example}

  \begin{example}
      Let us consider the matrix representations $M_{3\times 3}$ and $N_{3\times 2}$ of the fuzzy soft sets $(S,A)$ and $(G,B)$ respectively defined over a universal set $X$ respectively, as follows:
\[
    M_{3\times 3}=
    \begin{bmatrix}
        0.6 & 0.5 & 0.6 \\
          0.1 & 0.1 & 0.7 \\
           0.4 & 0 & 1 \\
     \end{bmatrix}
     \] and
     \[
    N_{3\times 2}=
    \begin{bmatrix}
        0.3 & 0.5 \\
          0 & 0.6 \\
          0.4 & 0.8 \\
     \end{bmatrix}
     \]
    
      Now, the matrix representation of the fuzzy soft set $(I,A\times B)=(S,A)\cap (G,B)$ is obtained as the matrix $I$ of order $3\times 6$ using 3.3.2, which is shown below:
 \[
    I_{3\times 6}=
    \begin{bmatrix}
        0.3 & 0.5 & 0.3 & 0.5 & 0.3 & 0.5\\
          0 & 0.1 & 0 & 0.1 & 0 & 0.6\\
          0.4 & 0.4 & 0 & 0 & 0.4 & 0.8\\
     \end{bmatrix}
     \]
      
      Similarly, the we can form the matrix representation $U$ of order $3\times 6$ for the fuzzy soft set  $(U,A\times B)=(S,A)\cup (G,B)$ using 3.3.3, which is shown below:

       \[
    U_{3\times 6}=
    \begin{bmatrix}
        0.6 & 0.6 & 0.5 & 0.5 & 0.6 & 0.6\\
          0.1 & 0.6 & 0.1 & 0.6 & 0.7 & 0.7\\
          0.4 & 0.8 & 0.4 & 0.8 & 1 & 1\\
     \end{bmatrix}
     \]
  \end{example}

\section{Decision-making in interviews using fuzzy soft set theory}

In this section, we present a hypothetical decision-making scenario in which two interviewers, Mr. Johan and Mr. Akim, are charged with selecting the best candidate for a position from three applicants, referred to as candidates A, B, and C, during an interview panel. The selection criteria are based on educational background $(x)$, experience $(y)$ and communication skills $(z)$. To make the selection process more structured and objective, the interviewers, Mr. Johan and Mr. Akim, decide to use fuzzy soft set theory, grading the candidates from 0 to 1. 

Mr. Johan and Mr. Akim construct two fuzzy soft sets $(S, A)$ and $(G, A)$ respectively, over a universal set $X$, utilizing their assessments of decision-making regarding all three candidates. They consider the set of parameters as $A=\{a, b, c\}$ and the universal set as $X=\{x, y, z\}$. Let these two fuzzy soft sets be defined as follows:\\ 
$S(a)=\{\frac{x}{0.9}, \frac{y}{0.8}, \frac{z}{0.8}\}$, $S(b)=\{\frac{x}{0.7}, \frac{y}{0.6}, \frac{z}{0.4}\}$ and $S(c)=\{\frac{x}{0.6}, \frac{y}{0.4}, \frac{z}{0.9}\}$, \\$G(a)=\{\frac{x}{0.7}, \frac{y}{0.8}, \frac{z}{0.9}\}$, $G(b)=\{\frac{x}{0.8}, \frac{y}{0.5}, \frac{z}{0.7}\}$ and $G(c)=\{\frac{x}{0.7}, \frac{y}{0.5}, \frac{z}{0.8}\}$.\\

Here, the membership values of $x$, $y$ and $z$ in each fuzzy set represent the points of $x$, $y$ and $z$ respectively, given by the interviewers, Mr. Johan and Mr.  Akim. \\

Now, to make a well-balanced decision, they need to use the intersection operation on these two fuzzy soft sets $(S, A)$ and $(G, A)$. Let $(H, A\times A)=(S, A) \cap (G, A)$. Thus, we obtain,\\

 $H(a, a)=\{\frac{x}{0.7}, \frac{y}{0.8}, \frac{z}{0.8}\}$, $H(a, b)=\{\frac{x}{0.8}, \frac{y}{0.5}, \frac{z}{0.7}\}$, $H(a, c)=\{\frac{x}{0.7}, \frac{y}{0.5}, \frac{z}{0.8}\}$, $H(b, a)=\{\frac{x}{0.7}, \frac{y}{0.6}, \frac{z}{0.4}\}$, $H(b, b)=\{\frac{x}{0.7}, \frac{y}{0.5}, \frac{z}{0.4}\}$, $H(b, c)=\{\frac{x}{0.7}, \frac{y}{0.5}, \frac{z}{0.4}\}$,
 $H(c, a)=\{\frac{x}{0.6}, \frac{y}{0.4}, \frac{z}{0.9}\}$,
 $H(c, b)=\{\frac{x}{0.6}, \frac{y}{0.4}, \frac{z}{0.7}\}$, and
 $H(c, c)=\{\frac{x}{0.6}, \frac{y}{0.4}, \frac{z}{0.8}\}$.\\
 
Here, the fuzzy soft set $(H, A\times A)$ represents the aggregate decision of Mr. Johan and Mr. Akim. Since the interviewers have to select only one candidate, they only need to consider the fuzzy sets $H(a, a)$, $H(b, b)$ and $H(c, c)$ of the fuzzy soft set $(H, A\times A)$. From these three fuzzy sets, it is evident that $H(b, b)\subset H(a, a)$ and $H(c, c)\subset H(a, a)$. This indicates that both Mr. Johan and Mr. Akim assign higher points to candidate A across all categories, including educational background, experience, and communication skills, in comparison to candidates B and C. Therefore, based on their assessments, it is likely that the interviewers will choose candidate A for the position.

\section{Conclusion}

In this paper, we have introduced and defined key concepts such as fuzzy soft set, equal fuzzy soft set, and equivalent fuzzy soft set, based on the correct notions of soft set introduced by Molodtsov \cite{1, 2, 3}. We have accurately described various unary and binary operations on fuzzy soft sets and developed significant results in this domain. Additionally, we presented fuzzy soft sets in matrix form and examined the behavior of two fuzzy soft sets through their matrix representations. This study included an exploration of unary and binary operations using their matrix representations. We believe that this paper will open a new chapter in the history of fuzzy soft set theory and aid researchers in developing further significant results in this area. Moving forward, we intend to further investigate and establish various ideas related to the correct notions of soft set theory as outlined by Molodtsov in our upcoming work.\\

{\bf Competing interests:} The authors declare that there is no competing interests.\\

{\bf Funding:} The authors did not receive any  funding.

\end{document}